\renewcommand{\labelenumi}{(\alph{enumi})}
\theoremstyle{plain}
\newtheorem{theorem}{Theorem}[section]
\newtheorem{lemma}[theorem]{Lemma}
\newtheorem{corollary}[theorem]{Corollary}
\newtheorem{prop}[theorem]{Proposition}
\theoremstyle{remark}
\newtheorem{remark}[theorem]{Remark}
\newtheorem*{note*}{Note}
\newtheorem*{remark*}{Remark}
\newtheorem*{example*}{Example}
\theoremstyle{definition}
\newtheorem*{definition*}{Definition}
\newtheorem{definition}[theorem]{Definition}
\newcommand{\Z}{\mathbb{Z}}
\newcommand{\Q}{\mathbb{Q}}
\newcommand{\C}{\mathbb{C}}
\newcommand{\N}{\mathbb{N}}
\newcommand{\F}{\mathbb{F}}
\newcommand{\Image}{\mathrm{Img}}
\newcommand{\Ker}{\mathrm{Ker}}
\newcommand{\Gal}{\mathrm{Gal}}
\newcommand{\tensor}{\otimes}
\newcommand{\Hom}{\mathrm{Hom}}
\newcommand{\disc}{\mathrm{Disc}}
\newcommand{\Norm}{\mathrm{Norm}}
\newcommand{\Cl}{\mathrm{Cl}}
\newcommand{\tr}{\mathrm{Tr}}
\newcommand{\OL}{\mathcal{O}_{L}}
\newcommand{\OK}{\mathcal{O}_{K}}
\newcommand{\OF}{\mathcal{O}_{F}}
\newcommand{\Ok}{\mathcal{O}_{k}}
\newcommand{\Frakm}{\mathfrak{M}}
\newcommand{\Frakp}{\mathfrak{P}}
\newcommand{\Frakq}{\mathfrak{Q}}
\newcommand{\Frakr}{\mathfrak{R}}
\newcommand{\frakp}{\mathfrak{p}}
\newcommand{\frakq}{\mathfrak{q}}
\newcommand{\frakr}{\mathfrak{r}}
\newcommand{\fraka}{\mathfrak{a}}
\newcommand{\frakb}{\mathfrak{b}}
\newcommand{\sseq}{\subseteq}
\newcommand{\Ram}{\mathrm{Ram}}
\newcommand{\Hone}{\mathrm{H}^{1}}
\newcommand{\eps}{\varepsilon}
\newcommand{\M}{\mathrm{M}}
\newcommand{\ind}{\mathrm{ind}}
\newcommand{\Pic}{\mathrm{Pic}}
\newcommand{\pic}{\mathrm{pic}}
\renewcommand{\O}{\mathcal{O}}
\renewcommand{\labelenumi}{(\alph{enumi})}
\title[Non-existence and splitting theorems for normal integral bases]
{Non-existence and splitting theorems \\ for normal integral bases}
\author{Cornelius Greither}
\address{Cornelius Greither\\
Fakult\"at f\"ur Informatik\\
Institut f\"ur theoretische Informatik und Mathematik\\
Universit\"at der Bundeswehr M\"unchen\\
85577 Neubiberg\\
Germany}
\email{cornelius.greither@unibw.de}
\author{Henri Johnston}
\address{Henri Johnston\\ 
St. Hugh's College\\
St. Margaret's Road\\
Oxford OX2 6LE\\
U.K.
}
\email{henri@maths.ox.ac.uk}
\urladdr{http://people.maths.ox.ac.uk/henri}
\thanks{Johnston was partially supported by a grant from the
 Deutscher Akademischer Austausch Dienst.}
\subjclass[2000]{Primary 11R33; Secondary 11R18}
\keywords{normal integral basis, additive Galois module structure}
\date{10th February, 2009}
\begin{document}

\begin{abstract}
We establish new conditions that prevent the existence of (weak) normal integral 
bases in tame Galois extensions of number fields. 
This leads to the following result: under appropriate technical hypotheses, the
existence of a normal integral basis in the upper layer of an abelian tower
$\Q \subset K \subset L$ forces the tower to be split in a very strong sense.
\end{abstract}

\maketitle

\section{Introduction}\label{intro}

Let $L/K$ be a tame abelian extension of number fields with Galois group $G$. 
Then the ring of integers $\OL$ is projective over the group ring $\OK[G]$
and we say that $L/K$ has a \emph{normal integral basis} (NIB) if $\OL$ is in fact 
free over $\OK[G]$. In the case $K=\Q$, the Hilbert-Speiser Theorem says that 
$L/K$ always has an NIB. However, the situation is rather more complex when $K \neq \Q$,
as illustrated by the following two results of Brinkhuis.

We call a number field $K$ a CM-field if it is a totally imaginary quadratic extension of a totally 
real field. Note that if $K/\Q$ is abelian then $K$ is either CM or totally real.

\begin{theorem}[\cite{brink-embed}]\label{br-non-split}
Let $K$ be a number field that is either CM or totally real and let $L$ be a finite abelian extension of $K$ of odd order.
Assume that for some subfield $k$ of $K$, over which $K$ and $L$ are Galois, the
short exact sequence of Galois groups
\[
1 \longrightarrow \Gal(L/K) \longrightarrow \Gal(L/k) \longrightarrow \Gal(K/k) \longrightarrow 1 
\]
is non-split. Then $L/K$ has no normal integral basis.
\end{theorem}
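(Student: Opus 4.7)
\medskip

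\noindent\textbf{Proof plan.} The plan is to assume that $L/K$ has an NIB and to use an NIB generator to exhibit a splitting of the given short exact sequence, contradicting the non-split hypothesis. Write $\Gamma=\Gal(L/k)$ and $\Delta=\Gal(K/k)$, and let $\alpha\in\OL$ be an NIB generator, so $\OL=\bigoplus_{g\in G}\OK\,g(\alpha)$.

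The first step is a reduction to a manageable local situation. Since $G$ is abelian of odd order, decompose $G=\prod_p G_p$ into its $p$-primary components. The NIB on $L/K$ descends to NIBs on each intermediate field, and the non-split hypothesis on $\Gamma$ passes (via pushout along $G\twoheadrightarrow G/G'$ for suitable subgroups $G'$) to a non-split sequence for some subquotient. After this reduction I may assume $G$ is cyclic of odd prime order $p$.

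The second step is to move into a Kummer setting. Put $K'=K(\zeta_p)$, $k'=k(\zeta_p)$, $L'=L(\zeta_p)$. Because $[K':K]\mid p-1$ is prime to $p$, both the NIB property and the non-splitness of the exact sequence transfer between the original tower and the tower $k'\subset K'\subset L'$; this uses a standard descent/restriction argument, together with the fact that $L'=K'(\sqrt[p]{\beta})$ for some $\beta\in K'^{\times}$ by Kummer theory. The tameness of $L/K$ translates into a concrete restriction on the class of $\beta$ in $K'^{\times}/K'^{\times p}$: essentially, the valuation of $\beta$ at every prime of $K'$ is congruent to $0$ modulo $p$ outside a set controlled by the ramification.

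The third step is to use the hypothesis that $K$ is CM or totally real. In either case there is a canonical involution (complex conjugation in the CM case, trivial in the totally real case) that commutes with $\Delta$ and acts predictably on $\zeta_p$. Combined with the NIB condition, this forces the Kummer class of $\beta$ to have a $\Delta'$-equivariant representative in the appropriate eigenspace for the $\chi$-action, where $\chi$ is the Teichm\"uller character of $\Gal(K'/K)$. Concretely, one can arrange that $\delta(\beta)/\beta^{\chi(\delta)}$ is a $p$-th power in $K'^{\times}$ for every $\delta\in\Delta'$.

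The final step is to build the splitting. For each $\delta\in\Delta'$, the above relation gives an element $\gamma_{\delta}\in K'^{\times}$ with $\delta(\beta)=\gamma_{\delta}^{p}\,\beta^{\chi(\delta)}$; define $\sigma(\delta)\in\Gamma'$ by $\sigma(\delta)|_{K'}=\delta$ and $\sigma(\delta)(\sqrt[p]{\beta})=\gamma_{\delta}(\sqrt[p]{\beta})^{\chi(\delta)}$. A direct computation, exploiting that $p$ is odd and that $\chi$ is a homomorphism, shows that $\sigma$ is a group homomorphism $\Delta'\to\Gamma'$, i.e.\ a section. This contradicts non-splitness of the sequence for the primed tower, hence for the original one.

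\medskip

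\noindent\textbf{Main obstacle.} The hard part is the third step: showing that an NIB forces the Kummer class of $\beta$ to live in the correct $\chi$-eigenspace of $K'^{\times}/K'^{\times p}$ under $\Gal(K'/K)$, and moreover to be $\Delta$-equivariant in the stronger sense needed to produce the cocycle $\gamma_{\delta}$ with values in $K'^{\times}$ (rather than only in an idele class group). This is where the CM or totally real hypothesis on $K$ must be used in an essential way to rule out the cohomological obstructions that would otherwise obstruct a global, as opposed to merely idelic, splitting.
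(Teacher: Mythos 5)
First, note that this paper does not prove Theorem \ref{br-non-split} at all: it is quoted from Brinkhuis \cite{brink-embed}, so your proposal can only be measured against Brinkhuis's original argument, which proceeds via embedding problems and the resolvent/Hom-description formalism rather than Kummer theory. Judged on its own, your proposal has a fatal gap already at the first reduction. It is not true that non-splitness of $1\to G\to\Gamma\to\Delta\to 1$ survives pushout to some $\Delta$-stable quotient $G/G'$ that is cyclic of prime order. Concretely, for $p$ odd let $\Gamma$ be the extraspecial group of order $p^{3}$ and exponent $p^{2}$, let $G$ be its normal elementary abelian subgroup of order $p^{2}$ and $\Delta=\Gamma/G\cong\Z/p\Z$: the sequence is non-split (every element of order $p$ in $\Gamma$ lies in $G$), but the only $\Gamma$-stable subgroup $G'\le G$ with $G/G'$ of order $p$ is the centre, and $\Gamma/Z(\Gamma)\cong(\Z/p\Z)^{2}$, so the pushed-out sequence splits. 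Towers realizing this configuration satisfy all hypotheses of the theorem ($L/K$ abelian with group $(\Z/p\Z)^{2}$, $L/k$ Galois), so after your reduction the non-splitness hypothesis is simply lost and no contradiction can be derived; the theorem cannot be proved one prime-order layer at a time.

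Even setting that aside, the remaining steps do not constitute a proof. Step 3, which you yourself flag as the main obstacle, is not an obstacle inside the proof: it essentially \emph{is} the theorem. Passing from the statement ``$\OL$ is free over $\OK[G]$'' to an equivariance statement for the Kummer representative $\beta$ at the level of $K'^{\times}$ (not merely of ideals or idele classes) is exactly where Brinkhuis does all the work, using the Hom-description of resolvent classes together with Lenstra's theorem that $(\O_{K}[\Gamma]^{\times})^{1-j}=\pm\Gamma$ for $K$ totally real and $\Gamma$ of odd order (the same result invoked in the proof of Theorem \ref{NIB-disjoint-split} in this paper); you give no argument for it. Finally, step 4 is circular even granting step 3: each $\gamma_{\delta}$ is determined only up to an element of $\mu_{p}$, and for $\sigma$ to be a homomorphism you need a coherent system satisfying $\gamma_{\delta_{1}\delta_{2}}=\delta_{1}(\gamma_{\delta_{2}})\,\gamma_{\delta_{1}}^{\chi(\delta_{2})}$; the obstruction to making such a choice is a class in $\mathrm{H}^{2}(\Delta',\mu_{p})$ which is, up to identification, the very extension class whose vanishing you are trying to establish. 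The mere existence of each $\gamma_{\delta}$ separately gives no section, so the ``direct computation'' cannot close the argument.
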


\begin{theorem}[\cite{brink}]\label{br-unram}
Let $L/K$ be an unramified abelian extension of number fields, each of which is either CM 
or totally real. If the Galois group of $L/K$ is not $2$-elementary, then $L/K$ has no normal integral basis.
\end{theorem}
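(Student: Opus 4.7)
The plan is to reduce to a cyclic prime-power subextension, translate NIB into Kummer data via cyclotomic base change, and then use complex conjugation to obtain a contradiction. Since $G = \Gal(L/K)$ is abelian and not $2$-elementary, it admits a cyclic quotient of prime-power order $n > 2$, with $n$ either an odd prime $p$ or $n = 4$. I would choose the kernel $H \leq G$ of this quotient to be stable under the order-two action of complex conjugation on $G$, so that $L' = L^H$ remains CM or totally real and unramified over $K$. Because $L/L'$ is unramified and hence tame, the trace map $\tr_{L/L'}\colon \OL \to \O_{L'}$ is surjective, and applied to a NIB generator $\alpha$ of $L/K$ it yields the NIB generator $\tr_{L/L'}(\alpha)$ of $L'/K$. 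So one may assume $G \cong \Z/n$; I focus on the odd-prime case $n = p$ (the case $n = 4$ being analogous).

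Next, set $F = K(\zeta_p)$ and $M = L(\zeta_p)$. Since $[F:K]$ divides $p-1$ and is coprime to $p$, one checks that NIB transfers under this tame base change (using $\O_M = \OL \otimes_{\OK} \O_F$), so $M/F$ is cyclic unramified of order $p$ and carries NIB. Kummer theory writes $M = F(\sqrt[p]{a})$ for some $a \in F^\times$, and unramifiedness forces $(a) = \mathfrak b^p$ for a fractional ideal $\mathfrak b$ of $F$, plus the appropriate local integrality at primes above $p$. The NIB hypothesis, via a standard criterion for Kummer extensions (e.g.\ the one of G\'omez-Ayala), forces $[\mathfrak b] = 0$ in $\Cl(F)/p\Cl(F)$; after absorbing the resulting $p$-th power into $a$, we may assume $a \in \O_F^\times$.

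Finally, complex conjugation $c$ acts on $F$, fixing $F^+$ and sending $\zeta_p \mapsto \zeta_p^{-1}$. Applying $c$ to the Kummer equation and using uniqueness of Kummer generators modulo $p$-th powers gives $a \cdot c(a) \in F^{\times p}$; equivalently, the class of $a$ in the $\F_p$-vector space $\O_F^\times / \O_F^{\times p}$ lies in the $(-1)$-eigenspace of $c$. By Dirichlet's unit theorem for the CM field $F$ (note that $F$ is CM whether $K$ is CM or totally real, since $\zeta_p \notin \R$), this eigenspace is one-dimensional, generated by $\zeta_{p^k}$ where $p^k$ is the largest $p$-power order of a root of unity in $F$. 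Hence $a \equiv \zeta_{p^k}^j \pmod{F^{\times p}}$ for some integer $j$; if $p \mid j$ then $M = F$, contradicting that $[M:F]=p$, and otherwise $M = F(\zeta_{p^{k+1}})$, which is ramified at primes above $p$ --- either way a contradiction. The hard part will be step~2, namely the clean Kummer-theoretic translation of NIB into the vanishing $[\mathfrak b] = 0$; step~3 is then a short unit-theoretic computation in which the hypotheses $n > 2$ and CM/totally real are both essential (for $n \leq 2$ the $(-1)$-eigenspace analysis becomes vacuous).
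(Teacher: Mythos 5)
This theorem is \emph{quoted} from Brinkhuis \cite{brink}; the paper gives no proof of it, so there is nothing internal to compare against. For what it is worth, Brinkhuis's own argument is cohomological in spirit (along the lines of Theorem~\ref{br-non-split}: one relates the class invariant of $\O_L$ to a non-vanishing cohomology class coming from roots of unity and the totally real/CM structure), which is a genuinely different route from the Kummer-theoretic approach you sketch. Your overall architecture --- reduce to a cyclic quotient of prime-power order $> 2$ using the trace map, base-change to $F = K(\zeta_p)$ via arithmetic disjointness (which is exactly Lemma~\ref{arith-disjoint} of the paper), then write $M = F(\sqrt[p]{a})$ and play complex conjugation against Dirichlet's unit theorem --- is a reasonable and attractive plan, and the formal computation $c(a) \equiv a^{-1} \pmod{F^{\times p}}$ together with the identification of the $(-1)$-eigenspace of $\O_F^\times \otimes \Z/p\Z$ with $\mu_F \otimes \Z/p\Z$ is correct.

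However, two steps contain real gaps. First, in step~2 the criterion you invoke is not the full NIB criterion for tame Kummer extensions: G\'omez-Ayala's condition for $F(\sqrt[p]{a})/F$ is not only an ideal-class statement about the factorization of $(a)$, it also includes a congruence condition on the Kummer generator modulo (powers of) primes above $p$. For an \emph{unramified} $M/F$ every $\mathfrak b_i$ in the adapted decomposition is already trivial, so the ideal-class part of the criterion is vacuous and the entire NIB obstruction must be carried by the local condition at $p$; reducing to ``$[\mathfrak b] = 0$, hence $a$ a unit'' therefore throws away precisely the information one needs. Second, the concluding contradiction in step~3 is not valid: from the hypotheses you have $M = L(\zeta_p)$ with $M/F$ unramified by construction, so if your argument arrives at $M = F(\zeta_{p^{k+1}})$ then $F(\zeta_{p^{k+1}})/F$ \emph{is} everywhere unramified in this situation; the assertion that it ``is ramified at primes above $p$'' is simply false here (it is true when $F/\Q(\zeta_{p^k})$ is tame at $p$, e.g.\ by Abhyankar's lemma, but not in general). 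So the argument as written does not close. In addition, the $n=4$ case is waved away as ``analogous'' even though the unit-group and root-of-unity bookkeeping over $\Q(i)$ is genuinely different from the odd-prime case. To rescue the approach you would need either the precise local clause of the Kummer NIB criterion, and then to show $a = \zeta_{p^k}$ fails it, or a different source of contradiction once $M = F(\zeta_{p^{k+1}})$ has been reached.
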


A further obstruction rests on the factorization of resolvents and prevents 
the existence of so-called \emph{weak normal integral bases} (WNIBs). We recall
that $L/K$ has a WNIB if $\mathfrak{M}\OL$ is free over $\mathfrak{M}$, where
$\mathfrak{M}$ is the maximal $\OK$-order in the group algebra $K[G]$. 
In \cite{brink}, Brinkhuis shows non-existence of a WNIB in certain cases
when $L=\Q(\zeta_{p})$, the $p$th cyclotomic field, and Cougnard
generalises these results in \cite{cougnard}.

In Section \ref{non-existence} of this paper, we exhibit further cases in which there is no WNIB. 
The main technical difference with the work of Brinkhuis and Cougnard is that we do not
use comparison with absolute extensions when showing that certain resolvents
have nontrivial class.

Recall that two number fields $L$ and $K$ are said to be \emph{arithmetically disjoint} over 
a common subfield $F$ if $\O_{LK} = \OL \tensor_{\OF} \OK$, or equivalently, 
$(\disc(\OL/\OF),\disc(\OK/\OF))=\OF$ and $L$ is linearly disjoint from $K$ over $F$ (see \cite[III.2.13]{ft}).
In his classic book \cite{frohlich-book}, Fr\"ohlich makes the observation that
if $L$ and $K$ are arithmetically disjoint over $F$ and $L/F$ has an NIB, then 
so does $LK/K$. He goes on to say, ``What one wants
are of course somewhat less trivial conditions [for the existence of NIBs]''. 
In Section \ref{splitting-theorems}, we show that in certain settings there are
no such conditions! More precisely, we prove that under appropriate technical hypotheses,
the existence of an NIB in the upper layer of an abelian tower 
$\Q \subset K \subset L$ forces the tower to be \emph{arithmetically split}, that is, there
exists $L'/\Q$ arithmetically disjoint from $K$ over $\Q$ such that $L=L'K$.

\section{Preliminaries}

Let $L/K$ be a tame abelian extension of number fields with Galois group $G$. 
(In this paper, we take ``tame'' to mean ``at most tamely ramified''.) The group algebra 
$K[G]$ contains a unique maximal $\OK$-order $\Frakm=\Frakm_{K[G]}=\Frakm_{L/K}$.
We say that $L/K$ has a \emph{weak normal integral basis} (WNIB) if the projective
$\Frakm$-module $\Frakm \tensor_{\OK[G]} \OL$ is free. 
Note that we may identify $\Frakm \tensor_{\OK[G]} \OL$ with $\Frakm \OL \subset L$.

Let $D(G)=D(K,G)$ denote the set of $K$-irreducible characters of $G$. 
For each $\psi \in D(G)$, let $D(\psi)$ denote the set of absolutely irreducible characters $\chi$ 
such that $\psi = \sum_{\chi \in D(\psi)} \chi$. Let 
\[ e_{\psi} := \frac{1}{|G|}\sum_{g \in G} \psi(g^{-1})g \]
be the corresponding primitive idempotent of $K[G]$. 
For $\psi \in D(G)$, fix an absolutely irreducible character 
$\chi_{\psi} \in D(\psi)$ and let $K_{\psi}=K(\chi_{\psi})$
be the field extension of $K$ generated by the values of $\chi_{\psi}$.
(Note that $K_{\psi}$ does not depend on the choice of $\chi_{\psi} \in D(\psi)$.)
Then we have $K$-algebra isomorphisms
$ \Phi_{\psi}: K[G] e_{\psi} \longrightarrow K_{\psi} $
induced by $g \mapsto \chi_{\psi}(g)$. These restrict to isomorphisms 
$\Frakm_{\psi} := \Frakm e_{\psi} \longrightarrow \mathcal{O}_{K_{\psi}}$
and we have
\[ K[G] = \bigoplus_{\psi \in D(G)} K[G] e_{\psi} 
\cong \bigoplus_{\psi \in D(G)} K_{\psi} \quad \textrm{and} \quad
\Frakm = \bigoplus_{\psi \in D(G)} \Frakm e_{\psi} \cong \bigoplus_{\psi \in D(G)} \mathcal{O}_{K_{\psi}}.\]
For $\chi \in D(\C, G)$ and $\alpha \in L$, define 
\[ ( \alpha \mid \chi )
= ( \alpha \mid \chi )_{L/K} := \sum_{g \in G} \chi(g^{-1}) g(\alpha) = 
|G|e_{\chi}\alpha \in L(\chi) \]
to be the \emph{resolvent} attached to $\alpha$ and $\chi$. 
Denote by $( \OL : \chi )$ the $\mathcal{O}_{K(\chi)}$-module generated by 
the $(\alpha \mid \chi)$ with $\alpha \in \OL$ (note that there exists a unique 
$\psi \in D(G)$ such that $\chi \in D(\psi)$, and 
$\mathcal{O}_{K(\chi)} = \mathcal{O}_{K_{\psi}}$ acts via $\Phi_{\psi}^{-1}$).

\begin{prop}\label{WNIB-resolvent}
$L/K$ has a WNIB if and only if $(\OL : \chi_{\psi})$ is free over $\mathcal{O}_{K_{\psi}}$
for every $\psi \in D(G)$. (Note that this is true irrespective of the choices of $\chi_{\psi} \in D(\psi)$.)
\end{prop}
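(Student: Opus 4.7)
The plan is to decompose both sides of the equivalence according to the idempotents $e_\psi$ and reduce to a rank-one identification. Using the displayed isomorphism $\Frakm \cong \bigoplus_{\psi \in D(G)} \mathcal{O}_{K_\psi}$, I first write $\Frakm \OL = \bigoplus_{\psi} \Frakm e_\psi \OL$ inside $L$. Freeness of $\Frakm \OL$ over $\Frakm$ is then equivalent to each summand $\Frakm e_\psi \OL$ being free over the Dedekind domain $\mathcal{O}_{K_\psi}$. Since the normal basis theorem for $L/K$ gives $L \cong K[G]$ as $K[G]$-modules, the summand $L e_\psi$ is one-dimensional over $K_\psi$, so each $\Frakm e_\psi \OL$ is a rank-one $\mathcal{O}_{K_\psi}$-module, and ``free'' just means ``principal''.

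The central step is to identify $\Frakm e_\psi \OL$ with $(\OL : \chi_\psi)$ as $\mathcal{O}_{K_\psi}$-modules. I would introduce the resolvent map $\Theta_\psi : L e_\psi \to L(\chi_\psi)$, $\beta \mapsto |G| e_{\chi_\psi} \beta$, where $e_{\chi_\psi} \in L(\chi_\psi)[G]$ acts on $L \subset L(\chi_\psi)$. The relation $e_{\chi_\psi} e_\psi = e_{\chi_\psi}$ makes the map well-defined on $L e_\psi$, and the computation $e_{\chi_\psi} g = \chi_\psi(g) e_{\chi_\psi}$ (valid because $G$ is abelian, hence $\chi_\psi$ is one-dimensional) shows that $\Theta_\psi$ is $K_\psi$-linear when $L e_\psi$ is equipped with the $K_\psi$-structure inherited through $\Phi_\psi$. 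By direct evaluation $\Theta_\psi(e_\psi \alpha) = (\alpha \mid \chi_\psi)$ for $\alpha \in \OL$, so $\Theta_\psi$ sends $\Frakm e_\psi \OL$ onto $(\OL : \chi_\psi)$.

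Since $\Theta_\psi$ is a $K_\psi$-linear map from a one-dimensional $K_\psi$-space and is nonzero (by clearing denominators in a normal basis generator to produce some $\alpha \in \OL$ with $(\alpha \mid \chi_\psi) \neq 0$), it is injective, hence restricts to an $\mathcal{O}_{K_\psi}$-module isomorphism $\Frakm e_\psi \OL \cong (\OL : \chi_\psi)$. Combined with the first step, this yields the equivalence. For the parenthetical independence assertion, replacing $\chi_\psi$ by a Galois conjugate $\tau \chi_\psi$ with $\tau \in \Gal(K_\psi/K)$ replaces $(\OL : \chi_\psi)$ by $\tilde\tau\bigl((\OL : \chi_\psi)\bigr)$ for any lift $\tilde\tau$ of $\tau$ to $L(\chi_\psi)$, and since $\tau$ preserves $\mathcal{O}_{K_\psi}$, this operation preserves principality.

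The main technical obstacle is verifying the $K_\psi$-linearity of $\Theta_\psi$, which hinges on the abelianness of $G$ through the identity $e_{\chi_\psi} g = \chi_\psi(g) e_{\chi_\psi}$. Once that is in place, the rest is essentially bookkeeping, reducing the problem to the standard correspondence between freeness and principality for rank-one modules over a product of Dedekind domains.
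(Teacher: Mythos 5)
Your proposal is correct and follows essentially the same strategy as the paper: reduce via the idempotent decomposition $\Frakm = \bigoplus_\psi \Frakm e_\psi \cong \bigoplus_\psi \mathcal{O}_{K_\psi}$ to a component-wise freeness statement, then identify each component with the resolvent module $(\OL : \chi_\psi)$ via the map $\beta \mapsto |G| e_{\chi_\psi}\beta$, with injectivity forced by the rank-one count coming from the normal basis theorem. The only cosmetic difference is that you work directly with $\Frakm e_\psi \OL$ inside $L$ rather than with the tensor product $\mathcal{O}_{K_\psi} \tensor_{\OK[G]} \OL$, and you additionally spell out the Galois-conjugation argument for the parenthetical independence claim, which the paper leaves implicit.
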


\begin{proof}
We emulate the argument given for \cite[Proposition 1.2]{greither-RNIB}. Observe that
\begin{eqnarray}
& & \Frakm \tensor_{\OK[G]} \OL \cong \Frakm \textrm{ as } \Frakm \textrm{-modules} \\
& \Longleftrightarrow &  \Frakm e_{\psi} \tensor_{\OK[G]} \OL \cong \Frakm e_{\psi} \textrm{ as } \Frakm e_{\psi}
\textrm{-modules for all } \psi \in D(G) \\
& \Longleftrightarrow & \mathcal{O}_{K_{\psi}} \tensor_{\OK[G]} \OL \cong \mathcal{O}_{K_{\psi}} 
 \textrm{ as } \mathcal{O}_{K_{\psi}} \textrm{-modules for all } \psi \in D(G).
\end{eqnarray}
Therefore it suffices to show that 
$\mathcal{O}_{K_{\psi}} \tensor_{\OK[G]} \OL \cong (\OL : \chi_{\psi})$ for each $\psi \in D(G)$.
Consider the map $\varphi : \OL \rightarrow  (\OL : \chi_{\psi})$, 
$\alpha \mapsto (\alpha \mid \chi_{\psi})$.
We let $G$ operate on $\mathcal{O}_{K_{\psi}}$ via $\chi_{\psi}$, hence $\varphi$ is $\OK[G]$-linear,
and we obtain an epimorphism
\[ \varphi' : \mathcal{O}_{K_{\psi}} \tensor_{\OK[G]} \OL \longrightarrow (\OL : \chi_{\psi}) .\]
By a rank argument, $\varphi'$ is also injective.
\end{proof}

\begin{corollary}\label{resolvent-not-principal}
Let $L'$ be any finite extension of $L$ such that $L(\chi) \sseq L'$ for all $\chi \in D(\C,G)$.
If $L/K$ has a WNIB, then for every $\psi \in D(G)$ the ideal 
$\mathcal{O}_{L'} (\OL : \chi_{\psi})$ is principal.
\end{corollary}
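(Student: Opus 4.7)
The plan is to leverage Proposition~\ref{WNIB-resolvent} to write $(\OL : \chi_\psi)$ as a free $\mathcal{O}_{K_\psi}$-module of rank one, and then observe that extending scalars to $\mathcal{O}_{L'}$ immediately yields a principal fractional ideal.

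First, Proposition~\ref{WNIB-resolvent} says that, under the WNIB hypothesis, $(\OL : \chi_\psi)$ is free as an $\mathcal{O}_{K_\psi}$-module for every $\psi \in D(G)$. To pin down the rank, I would use the isomorphism
\[ \mathcal{O}_{K_\psi} \tensor_{\OK[G]} \OL \;\cong\; (\OL : \chi_\psi) \]
extracted from the proof of that proposition. Tensoring with $K$ over $\OK$ and invoking the normal basis theorem (so that $L \cong K[G]$ as $K[G]$-modules) converts the left-hand side to $K_\psi \tensor_{K[G]} K[G] \cong K_\psi$, a one-dimensional $K_\psi$-vector space. Hence the freeness is of rank one, and there is a nonzero $\beta \in L(\chi_\psi)$ with $(\OL : \chi_\psi) = \mathcal{O}_{K_\psi} \beta$.

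Finally, the hypothesis $L(\chi_\psi) \sseq L'$ places $\beta$ in $L'$ and gives $\mathcal{O}_{K_\psi} \sseq \mathcal{O}_{L'}$, from which
\[ \mathcal{O}_{L'} (\OL : \chi_\psi) \;=\; \mathcal{O}_{L'} \cdot \mathcal{O}_{K_\psi}\, \beta \;=\; \mathcal{O}_{L'}\, \beta, \]
a principal fractional $\mathcal{O}_{L'}$-ideal. There is no substantive obstacle here; the only point requiring attention is the rank-one reduction, since Proposition~\ref{WNIB-resolvent} is stated in terms of freeness alone and one must separately verify via the normal basis theorem that the $\mathcal{O}_{K_\psi}$-rank equals one in order to produce a single generator.
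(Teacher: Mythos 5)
Your argument is correct and takes essentially the same route as the paper, which dismisses the corollary as following trivially from Proposition~\ref{WNIB-resolvent} once one observes that $(\OL : \chi_\psi) \subseteq \mathcal{O}_{L'}$. Your explicit rank-one verification via the normal basis theorem is a reasonable expansion of what the paper leaves implicit, though it is already built into the proof of Proposition~\ref{WNIB-resolvent}, which furnishes an $\mathcal{O}_{K_\psi}$-module isomorphism $(\OL : \chi_\psi) \cong \mathcal{O}_{K_\psi}$.
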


\begin{proof}
This follows trivially from Proposition \ref{WNIB-resolvent} once one notes that the hypothesis 
ensures $(\OL : \chi_{\psi}) \sseq \mathcal{O}_{L'}$ for every $\psi \in D(G)$.
\end{proof}

\begin{lemma}\label{trace-down}
Let $K \subset L \subset N$ be a tower of number fields such that $N/K$ is tame abelian. 
If $N/K$ has an NIB (resp.\ WNIB), then $L/K$ also has an NIB (resp.\ WNIB).
\end{lemma}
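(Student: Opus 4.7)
The plan is as follows. Set $G = \Gal(N/K)$ and $H = \Gal(N/L)$; since $N/K$ is abelian, $H$ is normal in $G$, the extension $L/K$ is tame abelian with Galois group $G/H$, and $N/L$ is tame as well. I would then hinge everything on the well-known fact that the trace map $\tr_{N/L}\colon \mathcal{O}_N \twoheadrightarrow \OL$ is surjective for tame Galois extensions (a consequence of Noether's local theorem on NIBs for tame extensions, via the local-global principle for ideals). Note also that $\tr_{N/L}$ is $\OK[G]$-linear, where $G$ acts on $\OL$ through its quotient $G/H$.

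For the NIB statement, I would fix $\alpha \in \mathcal{O}_N$ with $\mathcal{O}_N = \OK[G]\alpha$ and propose $\beta := \tr_{N/L}(\alpha)$ as the generator for $L/K$. Applying $\tr_{N/L}$ gives $\OL = \tr_{N/L}(\mathcal{O}_N) = \OK[G/H]\beta$, so the natural map $\OK[G/H] \to \OL$, $x \mapsto x\beta$, is a surjection between free $\OK$-modules of the same rank $[L:K]$ and is therefore an isomorphism by rank comparison.

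For the WNIB statement, the strategy is to use Proposition \ref{WNIB-resolvent}. Each $\psi \in D(K,G/H)$ inflates to $\tilde\psi \in D(K,G)$ with $K_{\tilde\psi} = K_\psi$, and we may take $\chi_{\tilde\psi}$ to be the inflation $\tilde\chi_\psi$ of $\chi_\psi$. A short direct computation, exploiting that $\tilde\chi_\psi$ is trivial on $H$, would yield
\[
(\alpha \mid \tilde\chi_\psi)_{N/K} = (\tr_{N/L}(\alpha) \mid \chi_\psi)_{L/K}
\qquad \text{for every } \alpha \in \mathcal{O}_N.
\]
Combined with the equality $\tr_{N/L}(\mathcal{O}_N) = \OL$, this identifies the $\mathcal{O}_{K_\psi}$-modules $(\mathcal{O}_N : \tilde\chi_\psi)$ and $(\OL : \chi_\psi)$. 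The WNIB hypothesis on $N/K$ forces the first of these to be free by Proposition \ref{WNIB-resolvent}; hence so is the second. Letting $\psi$ range over $D(K,G/H)$ and invoking the converse direction of Proposition \ref{WNIB-resolvent} then produces a WNIB for $L/K$.

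The only genuine external input is the surjectivity of the tame trace on rings of integers; granted this, the remaining work is the bookkeeping of how characters and resolvents transform under inflation from $G/H$ to $G$, which I expect to be routine.
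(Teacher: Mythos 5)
Your proposal is correct and follows essentially the same route as the paper: both hinge on the $\OK[G]$-linearity and tame surjectivity of $\tr_{N/L}$, with $\tr_{N/L}(\alpha)$ serving as the putative generator. The paper compresses the WNIB case into ``a similar argument applies''; your version via Proposition~\ref{WNIB-resolvent} and the inflation identity $(\alpha \mid \tilde\chi_\psi)_{N/K} = (\tr_{N/L}(\alpha) \mid \chi_\psi)_{L/K}$ is a clean and correct way to make that implicit step explicit.
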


\begin{proof}
Let $G=\Gal(N/K)$ and $H=\Gal(L/K)$. 
Since $N/L$ is tame, $\tr_{N/L}(\mathcal{O}_{N}) = \mathcal{O}_{L}$.
Suppose that $N/K$ has an NIB, i.e.,
there exists $\alpha \in \mathcal{O}_{N}$ such that 
$\mathcal{O}_{N} = \mathcal{O}_{K}[G] \cdot \alpha$.
Adapting the proof of \cite[Lemma 6]{byott-lettl}, we have 
\[
\mathcal{O}_{L} = \tr_{N/L}(\mathcal{O}_{N})
= \tr_{N/L}(\mathcal{O}_{K}[G] \cdot \alpha) 
= \mathcal{O}_{K}[G] \cdot \tr_{N/L}(\alpha)
= \mathcal{O}_{K}[H] \cdot \tr_{N/L}(\alpha).
\]
A similar argument applies for WNIBs. 
\end{proof}

\begin{lemma}\label{arith-disjoint}
Let $K$ be a number field with finite extensions $L$ and $F$ such that $L/K$ is tame abelian.
Let $E=LF$ and suppose that $L$ and $F$ are arithmetically disjoint over $K$.
$$
\xymatrix@1@!0@=24pt { 
& & E \\
L \ar@{-}[urr] & & \\
& & F \ar@{-}[uu] \\
K \ar@{-}[uu] \ar@{-}[urr] & & \\
}
$$
If $L/K$ has an NIB (resp.\ WNIB), then $E/F$ also has an NIB (resp.\ WNIB).
\end{lemma}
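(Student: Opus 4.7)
The plan is to treat the two assertions separately, with the NIB case being a straightforward tensor product argument and the WNIB case going through the resolvent characterisation of Proposition~\ref{WNIB-resolvent}. In both cases, one first observes that since $L$ and $F$ are linearly disjoint over $K$, restriction provides a canonical identification $\Gal(E/F) = G$, and that tameness of $E/F$ descends from tameness of $L/K$: the coprimality of $\disc(\OL/\OK)$ and $\disc(\OF/\OK)$ forces every inertia subgroup of $\Gal(E/F)$ to be carried to a (tame) inertia subgroup of $G$, whose order is prime to the relevant residue characteristic.

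For the NIB case, if $\OL = \OK[G]\cdot \alpha$, then arithmetic disjointness gives the identification $\O_E = \OL \tensor_{\OK} \OF$ as $\OF[G]$-modules (with $G$ acting through the first factor). Tensoring up yields
\[
\O_E \;=\; \OL \tensor_{\OK} \OF \;=\; \bigl(\OK[G] \tensor_{\OK} \OF\bigr) \cdot (\alpha \tensor 1) \;=\; \OF[G] \cdot (\alpha \tensor 1),
\]
so $\alpha \tensor 1$ is a normal integral basis generator for $E/F$.

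For the WNIB case, I would apply Proposition~\ref{WNIB-resolvent} to $E/F$. Fix $\chi \in D(\C,G)$ and let $\psi \in D(K,G)$, $\psi_F \in D(F,G)$ be the irreducible characters containing $\chi$. Because the values of $\chi$ generate $K_\psi$ over $K$, the character field over $F$ is $F_{\psi_F} = F(\chi) = F \cdot K_\psi$. The key identity is
\[
(\O_E : \chi)_{E/F} \;=\; \mathcal{O}_{FK_\psi} \cdot (\OL : \chi)_{L/K},
\]
which follows from $\O_E = \OL \tensor_{\OK} \OF$ together with the fact that $G$ fixes $\OF$ pointwise: writing any $\beta \in \O_E$ as a finite sum $\sum_i a_i b_i$ with $a_i \in \OL$ and $b_i \in \OF$, one gets $(\beta \mid \chi)_{E/F} = \sum_i b_i (a_i \mid \chi)_{L/K}$, and the scalars $b_i \in \OF \sseq \mathcal{O}_{FK_\psi}$ may be absorbed. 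Since $(\OL : \chi)_{L/K}$ is $\mathcal{O}_{K_\psi}$-free of rank one by the WNIB hypothesis and Proposition~\ref{WNIB-resolvent}, say on a nonzero element $\gamma \in LK_\psi$, it follows that $(\O_E : \chi)_{E/F} = \mathcal{O}_{FK_\psi} \cdot \gamma$ is free of rank one over $\mathcal{O}_{F_{\psi_F}}$. Running this over all $\psi_F \in D(F,G)$ and invoking the sufficient direction of Proposition~\ref{WNIB-resolvent} for $E/F$ completes the argument.

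The step I expect to require most care is the character-field identification $F_{\psi_F} = F K_\psi$ together with the resolvent identity above; both ultimately rest on the single tensor-product equality $\O_E = \OL \tensor_{\OK} \OF$ supplied by arithmetic disjointness, after which everything reduces to unwinding definitions.
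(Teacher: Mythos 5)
Your argument is correct; note that the paper itself dismisses this lemma with the single word ``Straightforward'', so there is no detailed proof to compare against, and what you have done is fill in the omitted details. Your NIB half is exactly the classical Fr\"ohlich observation quoted in the introduction: arithmetic disjointness gives $\O_E=\OL\tensor_{\OK}\OF$, and a generator $\alpha$ of $\OL$ over $\OK[G]$ becomes a generator of $\O_E$ over $\OF[G]$. Your WNIB half, via Proposition~\ref{WNIB-resolvent} and the identity $(\O_E:\chi)_{E/F}=\mathcal{O}_{F(\chi)}\cdot(\OL:\chi)_{L/K}$, is valid (the ``irrespective of choices'' remark in that proposition is what lets you run over all absolutely irreducible $\chi$), but it is more machinery than needed: one can argue directly with maximal orders, since $\OF\subseteq\Frakm_{F[G]}$ and $\Frakm_{K[G]}\subseteq\Frakm_{F[G]}$ give $\Frakm_{F[G]}\O_E=\Frakm_{F[G]}\OL\OF=\Frakm_{F[G]}\cdot\Frakm_{K[G]}\OL=\Frakm_{F[G]}\cdot\alpha'$ for a generator $\alpha'$ of $\Frakm_{K[G]}\OL$, with freeness following from the rank/linear disjointness argument exactly as in the NIB case. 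One small inaccuracy in your preamble: tameness of $E/F$ does not really come from the coprimality of discriminants; it follows simply because restriction embeds each inertia subgroup of $\Gal(E/F)$ into an inertia subgroup of $\Gal(L/K)$ (the residue extension of $\mathfrak{P}\cap L$ embeds in that of $\mathfrak{P}$), so its order is prime to the residue characteristic by tameness of $L/K$ — linear disjointness is what makes the restriction injective, and the discriminant condition is not needed for this particular point.
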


\begin{proof}
Straightforward.
\end{proof}

\section{Bounding a certain kernel}

\begin{definition}
Let $L/K$ be Galois extension of number fields with Galois group $G$. 
\begin{enumerate}
\item Let $\Ram(L/K)$ be the set of finite primes of $K$ that ramify in $L$.
\item For $\frakp \in \Ram(L/K)$, let $e_{\frakp}=e_{\frakp, L/K}$ 
denote the ramification index of $\frakp$ in $L/K$.\\
(Note that $e_{\frakp}$ is well defined because $L/K$ is Galois.)
\item Let $\M(L/K)$ be the abelian group $\bigoplus_{\frakp \in  \Ram(L/K)} \Z / e_{\frakp} \Z$.
\item Define a homomorphism of abelian groups
\[ \varepsilon_{L/K} : \M(L/K) \longrightarrow \frac{\Cl(\OL)^{G}}{\Image(\Cl(\OK))} \] 
by sending $\bar 1$ (at position $\frakp$)
to the class of $\prod_{\Frakp|\frakp}\Frakp$ ($\Frakp$ prime of $L$) where
$\Image(\Cl(\OK))$ denotes the image of the natural map 
$\Cl(\OK) \longrightarrow \Cl(\OL)$.\\
(Note that $(\prod_{\Frakp|\frakp}\Frakp)^{e_\frakp} = \frakp\OL$,
and therefore $\varepsilon_{L/K}$ is well defined.)
\end{enumerate}
Suppose that $L$ is a CM-field and that $K$ is either CM or totally real.  
\begin{enumerate}
\setcounter{enumi}{4}
\item Let $j$ denote complex multiplication. 
\item For a finite $G$-module $X$, let $X_{\mathrm{odd}}$ be the odd part
of $X$. If $j$ also acts on $X$, let $X^{-}$ be the minus part of $X_{\mathrm{odd}}$, i.e., $X^{-} = (X_{\mathrm{odd}})^{1-j}$.
\item Let $\mu_{L}$ be the group of roots of unity in $L$.
\end{enumerate}
\end{definition}

We make no claim that the following result is new; for cyclic
$G$ it can be deduced from results in \cite[Chapter 13, \S4]{lang-cyclo-II}.

\begin{prop}\label{ambig}
Suppose that $j$ commutes with every element of $G$. Then $j$ acts on $\Ram(L/K)$ and
$\Ker (\varepsilon_{L/K})^{-}$ is isomorphic to a subquotient of $\Hone(G,\mu_{L,\mathrm{odd}})$.
\end{prop}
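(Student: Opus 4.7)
The plan is threefold: verify that $j$ acts on $\Ram(L/K)$, then construct a $j$-equivariant surjection $\Hone(G,\OL^{\times}) \twoheadrightarrow \Ker(\varepsilon_{L/K})$ via Hilbert 90, and finally use the CM hypothesis on $L$ to bound $\Hone(G,\OL^{\times})^{-}$ by $\Hone(G,\mu_{L,\mathrm{odd}})$.

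For the first claim: since $j$ commutes with every element of $G$, it fixes $K = L^{G}$ setwise, and it preserves ramification in $L/K$, so it permutes $\Ram(L/K)$. Both $\M(L/K)$ and $\Cl(\OL)^{G}/\Image(\Cl(\OK))$ therefore inherit natural $j$-actions, and $\varepsilon_{L/K}$ is $j$-equivariant.

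For the crucial middle step: by Hilbert's Theorem 90 applied to $L^{\times}$, every class in $\Hone(G, \OL^{\times})$ is represented by a cocycle of the form $c_{g} = g\gamma/\gamma$ for some $\gamma \in L^{\times}$, the element $\gamma$ being unique up to multiplication by $K^{\times} \cdot \OL^{\times}$. Since each $c_{g}$ is a unit, the principal ideal $(\gamma)$ is $G$-invariant and defines a class in $\M(L/K) = I_{L}^{G}/I_{K}\OL$; this class is independent of the choice of $\gamma$, because modifying $\gamma$ by $K^{\times} \cdot \OL^{\times}$ changes $(\gamma)$ only by an element of $P_{K}\OL \sseq I_{K}\OL$. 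The image lies in $\Ker(\varepsilon_{L/K})$ since $(\gamma)$ is principal in $L$. Conversely, every $\xi \in \Ker(\varepsilon_{L/K})$ is represented by some $\fraka \in I_{L}^{G}$ with $\fraka = \frakb\OL \cdot (\gamma)$, $\frakb \in I_{K}$, whence $\fraka \equiv (\gamma) \pmod{I_{K}\OL}$ and $\xi$ is the image of the cocycle $g \mapsto g\gamma/\gamma$. The construction is $j$-equivariant, since $j$ sends the $\gamma$ for $[c]$ to $j\gamma$, whose associated ideal is $j((\gamma))$.

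Finally, the short exact sequence of $G$-modules
\[ 1 \longrightarrow \mu_{L} \longrightarrow \OL^{\times} \longrightarrow \OL^{\times}/\mu_{L} \longrightarrow 1 \]
yields the exact sequence
\[ \Hone(G, \mu_{L}) \longrightarrow \Hone(G, \OL^{\times}) \longrightarrow \Hone(G, \OL^{\times}/\mu_{L}). \]
Since $L$ is a CM-field, Kronecker's theorem (applied to the unit $u/j(u)$, all of whose archimedean absolute values are $1$) gives $u/j(u) \in \mu_{L}$ for every $u \in \OL^{\times}$, so $j$ acts trivially on $\OL^{\times}/\mu_{L}$ and therefore on all its cohomology. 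Hence $\Hone(G, \OL^{\times}/\mu_{L})^{-} = 0$, and exactness yields a surjection $\Hone(G, \mu_{L})^{-} \twoheadrightarrow \Hone(G, \OL^{\times})^{-}$. Combined with the surjection from the previous paragraph and the trivial inclusion $\Hone(G, \mu_{L})^{-} \sseq \Hone(G, \mu_{L,\mathrm{odd}})$, this exhibits $\Ker(\varepsilon_{L/K})^{-}$ as a subquotient of $\Hone(G, \mu_{L,\mathrm{odd}})$, as required. The main obstacle is the bookkeeping in the middle step: ensuring that both layers of ambiguity in the Hilbert 90 description of a cohomology class are absorbed when one passes to $\M(L/K)$; the other steps are essentially formal.
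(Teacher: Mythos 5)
Your proof is correct, but it takes a genuinely different route from the paper's. The paper argues from the kernel outward: for $z\in\Ker(\eps_{L/K})^{-}$ it writes $I(z)=x\fraka\OL$, passes to $I(z)^{1-j}$ so that $y=x^{1-j}$ is an anti-unit, notes that each $y^{\sigma-1}$ is then a unit and an anti-unit, hence a root of unity, and so sends $z$ directly to a class in $\Hone(G,\mu_L)$; since this assignment need not be a homomorphism, the paper must divide by an auxiliary group $U$ of cocycles to obtain an injection $\Ker(\eps_{L/K})^{-}\hookrightarrow\Hone(G,\mu_L)/U$. You reverse the arrow: Hilbert 90 gives an honest surjective homomorphism $\Hone(G,\OL^{\times})\twoheadrightarrow\Ker(\eps_{L/K})$ (after identifying $\M(L/K)$ with ambiguous ideals modulo extended ideals of $\OK$, which is immediate from the definitions), and the minus part of the source is then bounded via the cohomology sequence of $1\to\mu_L\to\OL^{\times}\to\OL^{\times}/\mu_L\to 1$ together with the Kronecker observation that $u/ju\in\mu_L$ for every unit $u$, so that $j$ acts trivially on $\OL^{\times}/\mu_L$ and its minus cohomology vanishes. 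The arithmetic input (units that are anti-units are roots of unity in the CM field $L$) is the same, but your packaging avoids the paper's non-homomorphism problem and the group $U$ altogether, and it is closer to the classical ambiguous-class computation that the paper itself cites (Lang) for cyclic $G$; the paper exhibits $\Ker(\eps_{L/K})^{-}$ as a subgroup of a quotient, you exhibit it as a quotient of a subgroup. What your route costs is a little extra bookkeeping that you largely, and correctly, handle: the two-layer well-definedness of the Hilbert 90 map modulo $K^{\times}\cdot\OL^{\times}$, finiteness of the cohomology groups involved, and the (routine, since $2$ is invertible on odd parts) fact that passing to $X\mapsto(X_{\mathrm{odd}})^{1-j}$ preserves the surjectivity and exactness you need — the paper is equally terse at the corresponding point, so none of this constitutes a gap.
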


\begin{proof}
The first claim is immediate.

Let $I$ be any ambiguous ($G$-stable) ideal of $\OL$ such 
that $[I] \in \Cl(\OL)^{-} \cap \Image(\Cl(\OK))$.
Then $I = x\fraka \OL$ for some $x \in \OL$ and 
some ideal $\fraka$ of $\OK$. Thus, $J:=I^{1-j}= y\fraka^{1-j} \OL$ with
 $y=x^{1-j}$, so $y$ is an anti-unit, i.e.,
 $y^{1+j}=1$. For every $\sigma \in G$,
$J^\sigma=J$, hence $y^{\sigma-1}$ must be a unit of $L$,
and therefore a root of unity because it is an anti-unit as well.
The map $\alpha_I: \sigma \mapsto y^{\sigma-1}$ is a 1-cocycle
on $G$ with values in $\mu_L$. 

For $z=(z_\frakp)_\frakp \in \M(L/K)$, let $I(z)$ denote
the ideal $\prod_\frakp (\prod_{\Frakp|\frakp} \Frakp)^{z_\frakp}$,
so that $\eps_{L/K}(z)$ is  the class of $I(z)$. 
Now assume $z \in \Ker (\varepsilon_{L/K})^{-}$. 
Then $J=I(z)^{1-j}$ can be written as above and one obtains 
a cocycle $\alpha_{I(z)}$. Let $\bar{\alpha}(z)$ denote the class of this cocycle in
$\Hone(G,\mu_L)$. If $\bar{\alpha}(z)$ is trivial, then
there exists a root of unity $\zeta$ such that $\zeta^{1-\sigma}=
\alpha_{I(z)}(\sigma) = y^{\sigma-1}$ for all $\sigma\in G$.
Then setting $y' = \zeta y$ gives $J = y'\fraka^{1-j}\OL$, and
$y'$ is fixed under $G$, hence in $K$. Therefore
$J$ is induced from an ideal of $\OK$. Now $J = I((1-j)z) =
I(2z)$; from the definition  one sees that $I(z')$
is only induced from a $\OK$-ideal if $z'$ is trivial in $\M(L/K)$.
Hence we have $z=0$. 

It remains only to resolve the technical problem that $\bar{\alpha}$
is not necessarily a homomorphism.
Let $U$ be the group of all cocycles $w\mapsto w^{\sigma-1}$,
where $w$ is an anti-unit generating an ideal $\frakb^{1-j}\OL$
with $\frakb$ an ideal of $\Ok$.
 Then changing the representation
$I=x\fraka\OL$ to another $I=x_1\fraka_1\O_L$ changes $\bar{\alpha}_I$
by a factor in $U$. Conversely, if $\bar{\alpha}_I \in U$, we
can change the representation of $I$ so as to make $\bar{\alpha}_I$ trivial.
If we define $\beta$ to be $\bar{\alpha}$ followed by the
projection $\Hone(G,\mu_L) \to \Hone(G,\mu_L)/U$, we see
that $\beta$ is an injective homomorphism.
Since the domain of definition of $\beta$ has odd order, 
we may replace $\mu_L$ by its odd part. 
\end{proof}

\section{Non-existence of weak normal integral bases}\label{non-existence}

\begin{definition}\label{abuse-ram-def}
Let $k \subset K \subset L$ be a tower of number fields. We adopt the following 
harmless abuse of language: we say that a prime $\frakp$ of $k$ ramifies in 
$L/K$ if some prime above $\frakp$ ramifies in $L/K$. We denote by $e_{\frakp,K/k}$
the ramification degree of $\frakp$ in $K/k$ and, if $L/k$ is Galois, we let $e_{\frakp,L/K}$ 
denote the ramification degree in $L/K$ of \emph{any} prime above $\frakp$ in $K$.
\end{definition}

\begin{theorem}\label{nownib1} 
Let $L/k$ be an abelian extension of number fields.
Let $K$ be an intermediate field such that $L/K$ is tame
and $K$ is totally real. Suppose there exists a prime $\frakp$ of $k$ such that
$e_{\frakp,K/k}$ has a nontrivial odd factor and $e_{\frakp,L/K}$ has an odd prime factor $\ell$,
for which the following two conditions are satisfied:
\begin{enumerate}
\item $K$ is linearly disjoint from $\Q(\zeta_{\ell})$ over $\Q$ (equivalently, $[K(\zeta_{\ell}):K]=\ell-1$); and
\item if $\ell=3$, then $e_{\frakp,K/k}$ has an odd prime divisor $q$ such that
$\zeta_{q} \not \in L(\zeta_{3^{\infty}})$.
\end{enumerate}
Then $L/K$ has no WNIB.
\end{theorem}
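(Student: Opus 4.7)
Assume for contradiction that $L/K$ has a WNIB. My plan is to reduce to $L/K$ cyclic of degree $\ell$, pass to a CM situation by adjoining $\zeta_\ell$, identify the (extended) resolvent ideal as an ambiguous ideal supported on the ramification of $\frakp$, and derive a contradiction from Proposition~\ref{ambig}. Since $L/K$ is tame, the inertia at a prime of $L$ above $\frakp$ is cyclic of order $e_{\frakp,L/K}$ and hence contains a subgroup of order $\ell$. Choose a character of $\Gal(L/K)$ of order $\ell$ nontrivial on this subgroup, and let $M$ be its fixed field; then $M/K$ is cyclic of degree $\ell$ with $\frakp$ ramified in $M/K$, all hypotheses on $\ell$ and on $e_{\frakp,K/k}$ persist for the tower $k\subset K\subset M$, and by Lemma~\ref{trace-down}, $M/K$ still has a WNIB. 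Replacing $L$ by $M$, I may assume $L/K$ is cyclic of degree $\ell$.

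Since $K$ is totally real and $[L:K]$ is odd, $L$ is totally real. Set $\tilde K:=K(\zeta_\ell)$ and $F:=L(\zeta_\ell)$, both of which are CM. Hypothesis~(a) gives $[\tilde K:K]=\ell-1$, so $L\cap\tilde K=K$ and $F/\tilde K$ is cyclic of degree $\ell$ with $\Gal(F/\tilde K)\cong H:=\Gal(L/K)$. Complex conjugation $j$ commutes with $\Gal(F/K)$, placing us in the hypotheses of Proposition~\ref{ambig} for $F/\tilde K$. Let $\chi$ be a faithful character of $H$; then $K_\chi=\tilde K$, and by Proposition~\ref{WNIB-resolvent} together with Corollary~\ref{resolvent-not-principal}, the extended ideal $(\OL:\chi)\O_F$ is principal. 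Because $g\cdot(\alpha\mid\chi)=\chi(g)(\alpha\mid\chi)$ for $g\in H$, it is $H$-stable, hence ambiguous for $F/\tilde K$. A local analysis at each ramified prime (using that $F/\tilde K$ is tame Kummer of degree $\ell$, and that a local normal basis generator produces a resolvent of controlled valuation) identifies the class of $(\OL:\chi)\O_F$ in $\Cl(\O_F)^H/\Image(\Cl(\O_{\tilde K}))$ with $\eps_{F/\tilde K}(z)$ for an explicit nonzero $z\in\M(F/\tilde K)$ supported on the primes of $\tilde K$ above $\frakp$; principality then forces $z\in\Ker(\eps_{F/\tilde K})$.

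Passing to minus parts, the hypothesis that $e_{\frakp,K/k}$ has a nontrivial odd factor controls the $j$-action on the primes of $\tilde K$ above $\frakp$ so that $(1-j)z\ne 0$ in $\M(F/\tilde K)$, and thus $z^-\in\Ker(\eps_{F/\tilde K})^-$ is nonzero. Proposition~\ref{ambig} then embeds $z^-$ into a subquotient of $\Hone(H,\mu_{F,\mathrm{odd}})/U$, and the contradiction will come from showing this subquotient is trivial. Since $|H|=\ell$, the prime-to-$\ell$ part of $\mu_{F,\mathrm{odd}}$ contributes nothing to $\Hone(H,-)$; for the $\ell$-power part the essential cocycle is $\sigma\mapsto\zeta_\ell$, and I would exhibit this cocycle in $U$ by producing a Kummer generator of $F/\tilde K$ that is an anti-unit whose ideal has the form $\frakb^{1-j}\O_F$, which is possible because $L=F^+$ is totally real (allowing the Kummer parameter $u\in\tilde K^\ast$ to be chosen compatibly under $j$). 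When $\ell=3$, however, extraneous roots of unity $\zeta_q\in F$ for small odd primes $q$ could contribute further cohomology and spoil the computation, and hypothesis~(b) --- requiring $\zeta_q\notin L(\zeta_{3^\infty})\supseteq F$ --- is precisely what rules them out. The main technical obstacle is exactly this last step: explicitly identifying $\bar\alpha(z^-)$ in terms of Kummer data for $F/\tilde K$ and verifying that it lies in $U$, with the case $\ell=3$ being the delicate one in which hypothesis~(b) becomes essential.
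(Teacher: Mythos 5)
Your opening reduction already has a gap: you cannot always choose a character of $\Gal(L/K)$ of order $\ell$ that is nontrivial on the order-$\ell$ subgroup of the inertia group. For example, if $\Gal(L/K)\cong \Z/\ell^{2}\Z$ and the inertia subgroup at $\frakp$ is the unique subgroup of order $\ell$ (so $e_{\frakp,L/K}=\ell$), then that subgroup is $\ell$-divisible in $\Gal(L/K)$ and every character of order $\ell$ kills it; no cyclic degree-$\ell$ subextension is ramified at $\frakp$. This is precisely why the paper reduces instead to a cyclic subextension of $\ell$-\emph{power} degree over $K$ in which $\frakp$ ramifies with index exactly $\ell$, not to a degree-$\ell$ extension.

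The more serious problem is the core of your argument. You base the ambiguous-ideal computation at $\tilde K=K(\zeta_\ell)$ and plan to conclude by showing that the relevant subquotient of $\Hone(H,\mu_{F,\mathrm{odd}})$ (your order-$\ell$ Kummer cocycle modulo $U$) is trivial, so that $\Ker(\eps_{F/\tilde K})^{-}=0$ forces $z^{-}=0$. But as set up, your argument never genuinely uses the hypothesis that $e_{\frakp,K/k}$ has a nontrivial odd factor: the nonvanishing of $(1-j)z$ is automatic, because $j=\delta_{-1}$ permutes the $\ell-1$ primes $\delta_i^{-1}\Frakq$ above $\Frakp$ freely and the Stickelberger-type exponents $i$ are not $j$-symmetric --- it has nothing to do with ramification in $K/k$. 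Consequently, if your remaining steps were correct they would apply equally when $K/k$ is unramified at $\frakp$ (or when $K=\Q$), contradicting Hilbert--Speiser; indeed, in the absolutely abelian Gauss sum/Stickelberger situation the analogous resolvent ideal \emph{is} principal while $z\neq 0$, so $\Ker(\eps)^{-}$ is genuinely nontrivial there and the order-$\ell$ cocycle does not lie in $U$. Your final step is therefore not just "the main technical obstacle'' but false in the generality you need, and there is no way to rescue it without importing the ramification of $\frakp$ in $K/k$, which your base $\tilde K$ cannot see (over $\tilde K$ the above-$\frakp$ component of $\M(F/\tilde K)$ has exponent $\ell$, so even a cyclicity count gives nothing). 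The paper gets its contradiction by descending \emph{below} $K$: it takes $F$ to be the inertia field of $\frakp$ in $L/k$ and works with $\eps_{L'/F'}$, where $\frakp$ is totally ramified of degree $r\ell$ in $L/F$ with $r$ an odd prime dividing $e_{\frakp,K/k}$; then $\theta z$ has coefficients modulo $r\ell$, its minus part spans a group of type $(r,r)$ when $\ell>3$, and this contradicts the cyclicity of $\Ker(\eps_{L'/F'})^{-}$ furnished by Proposition \ref{ambig}; for $\ell=3$, condition (b) ensures $\zeta_q\notin L'$ so that this kernel has order prime to $q$ while $\theta^{-}$ has order $3q$. Your intended use of condition (b) (to clean up $\Hone$ over $\tilde K$) likewise does not correspond to its actual role.
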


\begin{proof}
(1) Looking at ramification groups, we see that there exists an intermediate
extension $K \subset \tilde{L} \subset L$ such that $\tilde{L}$ is cyclic of $\ell$-power
degree over $K$ and in which (a prime above) $\frakp$ is ramified with precise exponent $\ell$.
By Lemma \ref{trace-down}, we may therefore suppose without loss of generality that in fact
$L = \tilde{L}$. (We have implicitly used the hypothesis that $L/k$ is abelian here; if we were only
to assume $L/k$ Galois and $L/K$ abelian, then $\tilde{L}/k$ would not necessarily be Galois.)
Note that $L$ is totally real since $[L:K]$ is odd,  $L/K$ is Galois and $K$ is totally real.
Furthermore, since $[L:K]$ is a power of $\ell$, $[K(\zeta_{\ell}):K]=\ell-1$ implies that
$[L(\zeta_{\ell}):L]=\ell-1$. In other words, $L$ is linearly disjoint from $\Q(\zeta_{\ell})$ over $\Q$.

(2) There exists an intermediate extension $k \subset \tilde{k} \subset K$ such that (a prime above)
$\frakp$ is still ramified in $K/\tilde{k}$ and $K/\tilde{k}$ is cyclic of odd prime order. 
(In the case $\ell=3$, we choose $\tilde{k}$ such that $[K:\tilde{k}]=q$.)
We may therefore suppose without loss of generality that in fact $k=\tilde{k}$.

(3) Let $r=[K:k]$ (an odd prime), $s=[L:K]$ (a power of $\ell$) and $L'=L(\zeta_{s})$.
By Corollary \ref{resolvent-not-principal}, it is enough to show for some faithful character
$\chi: \Gal(L/K) \rightarrow L'^{\times}$ that the ideal $I:=\mathcal{O}_{L'}(\OL : \chi)$ is not principal.
From the fact that $\ell$ divides $e_{\frakp,L/k}$,
we know that $\ell$ also divides $\Norm_{k/\Q}(\frakp)-1=|(\Ok/\frakp)^{\times}|$
(use the local Artin map) and so $\frakp$ is totally split in
$k(\zeta_{\ell})/k$. Let $\Delta = \Gal(L(\zeta_{\ell})/L)$. Then $\Delta$ is canonically
isomorphic to a subgroup $D$ of $(\Z/\ell\Z)^{\times}$; we denote the automorphism
attached to $i \in D$ by $\delta_{i}$. Since $L$ is linearly disjoint from
$\Q(\zeta_{\ell})$ over $\Q$, we in fact have $D=(\Z/\ell\Z)^{\times}$. (Note that
it is possible to weaken the disjointness hypothesis - see Remark \ref{weaken-lin-dis}.)

Fix a prime $\Frakp$ of $L$ above $\frakp$ and 
let $\F$ be the residue field $\OK/\Frakp \cap K$.
Let $\eta$ be the restriction of the local Artin map, $\F^{\times} \rightarrow \Gal(L/K)$, followed by $\chi$. 
Then $\eta$ has image exactly $\mu_{\ell} \subset L(\zeta_\ell)$ because $e_{\frakp,L/K}=\ell$.
Define $\gamma:\F^{\times} \rightarrow \F^{\times}$ by $x \mapsto x^{-f}$ where 
$f=|\F^{\times}|/\ell$ (note that $|\F^{\times}|=\Norm_{K/\Q}(\Frakp \cap \OK)-1$ is a multiple of
$|(\Ok / \frakp)^{\times}| = \Norm_{k/\Q}(\frakp)-1$, which is divisible by $\ell$).
It is straightforward to see that $\gamma$ has image  $\mu_\ell \subset \F^{\times}$.
Therefore there exists exactly one prime ideal
$\Frakq$ above $\Frakp$ in $L(\zeta_\ell)$ such that $\eta$ and
$\gamma$ agree modulo $\Frakq$. From \cite[Theorem 26 (i)]{frohlich-book} 
(the proof of which is a fairly standard argument resting crucially on a certain local 
calculation involving a Kummer extension) we obtain:
\begin{enumerate}
\renewcommand{\labelenumi}{(\Alph{enumi})}
\item For every prime $\Frakr$ of $L'$ above $\Frakq$, 
the ideal $I=\mathcal{O}_{L'}(\OL : \chi)$ has valuation $1$ at  $\Frakr$.
\end{enumerate}
Since $\frakp$ is totally split in $k(\zeta_{\ell})/k$, 
we know that $\Frakp$ splits into $\ell - 1$ factors in $L(\zeta_{\ell})$; 
these are permuted by $\Delta=\Gal(L(\zeta_{\ell})/L)$. From loc.\ cit., we also obtain:
\begin{enumerate}
\renewcommand{\labelenumi}{(\Alph{enumi})}
\setcounter{enumi}{1}
\item For all $i=1, \ldots, \ell - 1$ and every prime $\Frakr$ of $L'$ above $\delta_{i}^{-1}\Frakq$,
the ideal $I=\mathcal{O}_{L'}(\OL : \chi)$ has valuation $i$ at  $\Frakr$.
\end{enumerate}
\[
\xymatrix@1@!0@=30pt {
& & & & \Frakr \\
& & \Frakq  & & L' \ar@{-}[dll]  \\
\Frakp & & L(\zeta_{\ell})  \ar@{-}[dll]_{\Delta} \ar@{-}[dd] & & \\
L \ar@{-}[dd]_{s} & & & &  \\
& & K(\zeta_{\ell}) \ar@{-}[dd] \ar@{-}[dll]  & &\\
K \ar@{-}[dd]_{r} & & & &  \\
& & k(\zeta_{\ell}) \ar@{-}[dll] & & \\
k & & & &\\
\frakp & & & &\\
}
\]

Now let $\Frakq_i$ denote the product of all primes of $L'$ 
over $\delta_{i}^{-1}\Frakq$. Then $\Frakp \mathcal{O}_{L'} = \Frakq_1 \cdots \Frakq_{\ell-1}$,
where the factors are pairwise coprime. Let
\[ \theta= \sum_{i=1}^{\ell-1} i \delta_i^{-1} \in \Z[\Delta] .\]
By definition of the $\Frakq_i$, the Galois group
$\Gal(L'/L)$ acts on them through its quotient $\Delta$. Thus 
the expression $\Frakq_1^\theta$ makes sense, and from (B) we obtain
the following key information:
\begin{enumerate}
\renewcommand{\labelenumi}{(\Alph{enumi})}
\setcounter{enumi}{2}
\item The ``above-$\Frakp$-part'' of $I=\mathcal{O}_{L'}(\OL : \chi)$ is $\Frakq_{1}^{\theta}$.
\end{enumerate}

(4) We need two auxiliary fields: let $F$ be the inertia field of
$\frakp$ in $L/k$ and put $F'=F(\zeta_{s})$. Then $[L:F]=r\ell$
and $\frakp$ is totally ramified in $L/F$. If $r$ and $\mathfrak{p}$ are coprime,
then since $\frakp$ is tamely ramified in $L/K$, it is also tamely ramified in $L/F$
and so $\Gal(L/F)$ must be cyclic. If $r$ and $\mathfrak{p}$ are not coprime,
then $\frakp$ has wild ramification degree $r$ in $L/F$ and so
$\Gal(L/F)$ is isomorphic to a semi-direct product of $(\Z/\ell\Z)$ with $(\Z/r\Z)$.
However, the hypothesis that $L/k$ is abelian forces this product to be direct
and so again $\Gal(L/F)$ is cyclic (note that if $r=\ell$, then $r$ is coprime to $\frakp$).
Furthermore, $L$ must be linearly disjoint from $F'$ over $F$ because $F'/F$ is unramified
at $\frakp$, whereas $L/F$ is totally ramified at $\frakp$.
Let  $\frakq_i$ be the product of all distinct primes of $F'$ below factors of $\Frakq_i$.
(Note that because of total ramification, $\frakp$-primes in $F'$ and $L'$ correspond bijectively.) 
\[
\xymatrix@1@!0@=30pt {
& & & & L' \ar@{-}[dll]  \ar@{-}[dd]  & \Frakq_{i}  \\
& & L(\zeta_{\ell})  \ar@{-}[dll]_{\Delta} \ar@{-}[dd] & & & \\
L \ar@{-}[dd]_{r\ell} & & & & F' \ar@{-}[dll]  & \frakq_{i} \\
& & F(\zeta_{\ell}) \ar@{-}[dll]  & & & \\
F & & & & & \\
}
\]
From the definition of resolvents, we see that $I$ is an ambiguous ideal 
under $\Gal(L'/F')$. 

(5) In taking minus parts in what follows, it is important to note that complex 
conjugation $j$ is just $\delta_{-1}$ because $L$ is totally real.
Since $\Gal(L'/F')$ is cyclic, we find that $\Hone(L'/F',\mu_{L'})=\Hom(L'/F',\mu_{L'})$ 
is also cyclic. By Proposition \ref{ambig} (applied to $L'/F'$ instead of $L/K$), we see 
that $\Ker(\eps_{L'/F'})^{-}$ is a cyclic group.

The group $\Delta=\Gal(L(\zeta_{\ell})/L)$, which can be seen as the non-$\ell$-part 
of $\Gal(L'/L)$, acts on both $F'/F$ and $L'/L$. Let $R=(\Z/r\ell\Z)[\Delta]$.
Then $\eps_{L'/F'}$ is in fact an $R$-module homomorphism.
We can write $\M(L'/F')=A \oplus B$ where $A$ is the $R$-module consisting of 
elements $x=(x_{\frakr})_{\frakr}$ with $x_{\frakr}=0$ for all $\frakr \not \in \{ \frakq_{i} \}$
and $B$ is the $R$-module consisting of elements $y=(y_{\frakr})_{\frakr}$ with
$y_{\frakq_{i}}=0$ for all $\frakq_{i}$. 
By abuse of notation, we do not distinguish between $\theta \in \Z[\Delta]$
and its projection to $R$.   Let $z \in A$ denote the element with entries $1$ 
at all primes dividing $\frakq_1$, and zeros elsewhere. Then $\theta z \in A$
and by the partial factorization given in (C), 
we know that there exists $z' \in B$ such that
\[ [I] = \eps_{L'/F'}(\theta z + z'). \]

We now proceed by contradiction. Suppose that $L/K$ does in fact have a WNIB.
Then by Corollary \ref{resolvent-not-principal}, the resolvent ideal $I$ must be principal
and so $\theta z + z' \in \Ker(\eps_{L'/F'})$. Let $\pi : A \oplus B \rightarrow A$
be the natural projection. Then 
\[
\pi(\theta z + z')  = \theta z \in \pi(\Ker(\eps_{L'/F'}))
\]
and so letting $J=R\theta$ be the ideal of $R$ generated by $\theta$, we have
\[
J^{-}z \sseq \pi(\Ker(\eps_{L'/F'}))^{-}=\pi(\Ker(\eps_{L'/F'})^{-}).
\]
However,
$\pi(\Ker(\eps_{L'/F'})^{-})$ is cyclic as an abelian group since the same is true
for $\Ker(\eps_{L'/F'})^{-}$ (note that our group $G$ is cyclic by
the reduction performed in (1) and use Proposition \ref{ambig}). Therefore in order to show that $L/K$ has no WNIB, 
it suffices to show that $(Jz)^{-}=J^{-}z$ is not cyclic as an abelian group. 
Since $Rz$ is a free $R$-submodule of $A$ of rank $1$, we see that 
$J^{-}$ and $J^{-}z$ are isomorphic as $R$-modules and hence as abelian groups. 
Thus we are further reduced to showing that $J^{-}$ is not cyclic as an abelian group.

(6) Assume that $\ell > 3$ (we shall return to the $\ell=3$ case later).
We consider the two elements $\ell\theta$ and $(2-\delta_2)\theta$
in $\ell R$, which identifies with $(\Z/r\Z)[\Delta]$
(``division by $\ell$''). Then the two elements take the shape
\[ u =  1\cdot\delta_1^{-1} + 2\cdot\delta_2^{-1} + \ldots + (\ell-1)\cdot\delta_{\ell-1}^{-1} \]
and
\[v = \delta_{(\ell+1)/2}^{-1}+\delta_{(\ell+3)/2}^{-1}+ \ldots + \delta_{\ell-1}^{-1}, \]
respectively. We now project them into the minus part, by sending
$\delta_{\ell-i}^{-1}$ to $-\delta_{i}^{-1}$ for $i=1,\ldots,(\ell-1)/2$.
The result is
\[ u_- = (2-\ell)\cdot\delta_1^{-1} + (4-\ell)\cdot\delta_2^{-1} + 
\ldots + (-1)\cdot \delta_{(\ell-1)/2}^{-1},\]
and
\[ v_- = -\delta_{1}^{-1}-\delta_{2}^{-1}- \ldots - \delta_{(\ell-1)/2}^{-1}. \]
Looking just at the first two coefficients of $u_-$ and $v_-$
and noting that 
\[
\det \left( 
\begin{array}{cc}
2-\ell & 4-\ell \\
-1 & -1
\end{array}
\right)
= \ell-2+4-\ell=2,
\]
we see that $u_-$ and $v_-$ between them generate an abelian group
of type $(r,r)$. In particular, $J^-$ cannot be cyclic as an abelian group.

(7) Finally, we discuss the case $\ell=3$. 
We have $[K:k]=r=q$ by the choice made in step (2).
By condition (b), $\zeta_{q} \not \in L' \subset L(\zeta_{3^{\infty}})$ and so
the group $\Hone(L'/F',\mu_{L'})$ has order prime to $q$.
Hence by Proposition \ref{ambig}  (applied to $L'/F'$ instead of $L/K$), 
we see that $\Ker(\eps_{L'/F'})^{-}$ also has order prime to $q$.
However, the element $\theta$ projected to the minus part of $(\Z/3q\Z)[\Delta]$
comes out as $(2-\ell)\delta_1=-\delta_1$, which has order $q\ell$. 
The argument is completed as before. 
\end{proof}

We now give a corollary that will used in the proof of Theorem \ref{NIB-split}.
For this we need a compatibility result for resolvents, which the authors were unable to find in the literature, 
but seems unlikely to be new. We give a proof for the convenience of the reader.

We retain the notation $K \subset \tilde L \subset L$ from step (1) in the above proof, 
dropping the assumption that $L$ equals $\tilde L$.
To $\tilde L$ we associated a resolvent ideal which we now write $\tilde I = \mathcal{O}_{L'}
(\mathcal{O}_{\tilde L}:\tilde \chi)$ with $\tilde \chi$ a faithful character of
$\tilde L/K$. We likewise have a resolvent ideal $I = \mathcal{O}_{L'}
(\mathcal{O}_{ L}:\chi)$ for any character $\chi$  of
$L/K$. (A choice for $\chi$ will be made in a moment.)
Now assume $L/K$ is cyclic and write $t$ for the degree $[L:\tilde L]$. Then by construction $L/\tilde L$
is totally ramified at all primes above $\frakp$; we pick a faithful character $\chi$
of $L/K$ such that $\chi^t$ induces $\tilde \chi$. 

\begin{lemma}\label{normcompat}   
Under the conditions above, we have the following norm compatibility:
\[ (\hbox{\rm N}_{L'/{\tilde L}'}I)_\frakp = \tilde I_\frakp, \]normalized
where the subscript $\frakp$ denotes taking the above-$\frakp$ part of an ideal,
that is, one omits all powers of prime ideals not above $\frakp$ from the factorization
of the ideal.
\end{lemma}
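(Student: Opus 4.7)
The plan is to verify the identity $(\mathrm{N}_{L'/\tilde L'}I)_\frakp=\tilde I_\frakp$ locally at each prime of $\tilde L'$ above $\frakp$. Because $L/\tilde L$ is, by construction, totally ramified at every prime of $\tilde L$ above $\frakp$, each prime $\frakp'$ of $\tilde L'$ above $\frakp$ admits a unique prime $\Frakr$ of $L'$ above it, with $e(\Frakr/\frakp')=t$ and $f(\Frakr/\frakp')=1$. For any ideal $\mathfrak{a}$ of $\mathcal{O}_{L'}$ supported above $\frakp$, the standard norm formula then gives $v_{\frakp'}(\mathrm{N}_{L'/\tilde L'}(\mathfrak{a}))=v_{\Frakr}(\mathfrak{a})$, so it suffices to show $v_{\Frakr}(I)=v_{\frakp'}(\tilde I)$ for every such matched pair $(\Frakr,\frakp')$.

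For this, I would apply Fr\"ohlich's local resolvent calculation \cite[Theorem 26 (i)]{frohlich-book}---the same input that produced conclusions (A)--(C) in the proof of Theorem \ref{nownib1}---separately to the extension $L/K$ with character $\chi$ and to $\tilde L/K$ with character $\tilde\chi$. For each extension the formula computes the valuation of the resolvent ideal at a prime above $\frakp$ by comparing the local character (obtained by composing the local Artin map with the character) to a fixed residue character of the form $\gamma\colon x\mapsto x^{-f}$. Writing $G=\Gal(L/K)$ and $H=\Gal(L/\tilde L)$, the hypothesis that $\chi^t$ inflates $\tilde\chi$ says exactly that $\chi|_H$ is a faithful character of the cyclic group $H$ of order $t$ and that $\tilde\chi$ corresponds on $G/H$ to $\chi^t$. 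Since $L/\tilde L$ is totally $\frakp$-ramified, $H$ is identified with the inertia subgroup at $\Frakr$, and the local Artin character of $\chi$ at $\Frakr$ pulls back, via $\chi^t=\tilde\chi\circ\pi$, to the local Artin character of $\tilde\chi$ at $\frakp'$.

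Feeding this identification into Fr\"ohlich's prescription produces the same integer on both sides, completing the proof. The main obstacle will be exactly this last compatibility: checking that the valuation formula for $\chi$ at $\Frakr$ and for $\tilde\chi$ at $\frakp'$ really yields the same answer at compatibly chosen primes amounts to a routine but fiddly bookkeeping exercise with local characters, uniformizers, and residue classes. The restriction to the above-$\frakp$ part in the statement is essential, since at primes outside the $\frakp$-decomposition the two sides are governed by unrelated local data and need not agree.
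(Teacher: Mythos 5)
Your proposal follows essentially the same route as the paper's proof: both reduce to completions at places above $\frakp$, invoke Fr\"ohlich's Theorem 26(i) on local resolvents for both $\chi$ and $\tilde\chi$, and exploit the relation between $\tilde\chi$ and $\chi^t$ (so the corresponding local characters satisfy $\tilde\eta = \eta^t$) together with the total ramification of $L/\tilde L$ to compare the two valuations. The ``fiddly bookkeeping'' you flag as remaining is exactly what the paper closes in a single line: Fr\"ohlich's integer parameter $s$ attached to $\eta$ scales as $\tilde s = ts$, which gives $v(\tilde I) = t\cdot v(I)$ in the normalized $p$-adic valuation and, upon unwinding the ramification index $t$, is precisely your asserted equality $v_{\Frakr}(I) = v_{\frakp'}(\tilde I)$.
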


\begin{proof} The assertion is equivalent to the corresponding assertion for all completions
at places above $\frakp$. So for the rest of this proof we assume that all our fields are complete
(the base field $k$ is replaced by its $\frakp$-adic completion), but 
denote them by the same letters as before. 

As in the proof of Theorem \ref{nownib1}, we look at  
\cite[p.135]{frohlich-book} ($F$ there being our $K$),
and we consider the characters $\eta$ and $\tilde\eta$
of $\OK^{\times}$ afforded via local class field theory by
$\chi$ and $\tilde\chi$ respectively. (Fr\"ohlich's
notation is $\varphi$ instead of $\eta$.) Then we have
$\tilde\eta=\eta^t$, and therefore the integers $s$ and
$\tilde s$ attached to $\eta$ and $\tilde\eta$ resp.\
as in loc.\ cit.\ are also linked by the relation $\tilde s =ts$.
From this and again \cite[Theorem 26 (i)]{frohlich-book} we obtain,
with $v$ the normalized $p$-adic valuation:
\[ v(\tilde I) = t\cdot v(I). \]
Since the degree $t$ extension $L/\tilde L$ is totally ramified, 
the preceding formula amounts exactly to the required norm relation.
\end{proof}

\begin{corollary}\label{nownib1-cor}
Assume the hypotheses and notation of Theorem \ref{nownib1}
and make the further assumption that $L/K$ is cyclic of degree $\ell^{m}$
for some $m \geq 1$. Suppose that there exist fields $L_{1}$ and $K_{1}$ 
with $L \sseq L_{1} \sseq L(\zeta_{\ell})$,
$K \sseq K_{1} \sseq K(\zeta_{\ell})$ and $[L_{1}:L]=[K_{1}:K]$.
\[
\xymatrix@1@!0@=30pt {
& & & & L(\zeta_{\ell}) \ar@{-}[dll] \ar@{-}[dd]^{\ell^{m}}   \\
& & L_{1}  \ar@{-}[dll] \ar@{-}[dd]^{\ell^{m}} & &  \\
L \ar@{-}[dd]_{\ell^{m}} & & & & K(\zeta_{\ell}) \ar@{-}[dll]  \\
& & K_{1} \ar@{-}[dll]  & &  \\
K & & & & \\
}
\]
Then $L_{1}/K_{1}$ does not have a WNIB.
\end{corollary}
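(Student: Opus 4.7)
The plan is to reduce directly to Theorem \ref{nownib1} by showing that the resolvent ideal for $L_1/K_1$ attached to a faithful character coincides with the corresponding resolvent ideal for $L/K$; the non-principality established in the proof of Theorem \ref{nownib1} then transfers automatically.

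First I would establish that $L$ and $K_1$ are arithmetically disjoint over $K$. Since $L/K$ is tame of $\ell$-power degree, any ramification index at a prime of $K$ above $\ell$ would be both a power of $\ell$ and coprime to $\ell$, hence equal to $1$, so $L/K$ is unramified above $\ell$. On the other hand $K_1 \sseq K(\zeta_\ell)$, so $K_1/K$ is ramified only above $\ell$; thus $\disc(L/K)$ and $\disc(K_1/K)$ are coprime. Linear disjointness of $L$ and $K_1$ over $K$ also holds, since $\gcd(\ell^m,\ell-1)=1$ and $[K_1:K]$ divides $\ell-1$. We obtain $\mathcal{O}_{L_1} = \OL \tensor_{\OK} \mathcal{O}_{K_1}$, and restriction gives an isomorphism $\Gal(L_1/K_1) \cong \Gal(L/K)$. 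Observe also that $L(\zeta_{\ell^m}) = L_1(\zeta_{\ell^m})$: the containment $L(\zeta_{\ell^m}) \sseq L_1(\zeta_{\ell^m})$ is clear from $L \sseq L_1$, and the reverse follows from $L_1 \sseq L(\zeta_\ell) \sseq L(\zeta_{\ell^m})$. Call this common field $L'$.

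The main step is to fix a faithful character $\chi$ of $\Gal(L/K)$, let $\chi_1$ denote the corresponding character of $\Gal(L_1/K_1)$ under the restriction isomorphism, and show that the resolvent ideals $I := \mathcal{O}_{L'}(\OL : \chi)$ and $I_1 := \mathcal{O}_{L'}(\mathcal{O}_{L_1} : \chi_1)$ are equal. The inclusion $I \sseq I_1$ is immediate: for $\alpha \in \OL$, the sums defining $(\alpha \mid \chi)_{L/K}$ and $(\alpha \mid \chi_1)_{L_1/K_1}$ agree term by term under the restriction identification. For the reverse inclusion, any $\beta \in \mathcal{O}_{L_1}$ decomposes as a finite sum $\beta = \sum_i \alpha_i \gamma_i$ with $\alpha_i \in \OL$ and $\gamma_i \in \mathcal{O}_{K_1}$, by the tensor decomposition above. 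Since $\Gal(L_1/K_1)$ fixes each $\gamma_i$, these scalars pull outside the resolvent sum, yielding $(\beta \mid \chi_1) = \sum_i \gamma_i (\alpha_i \mid \chi) \in \mathcal{O}_{K_1} \cdot I \sseq I$, whence $I_1 \sseq I$.

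With $I = I_1$ in hand, the argument concludes as follows. If $L_1/K_1$ admitted a WNIB, Corollary \ref{resolvent-not-principal} applied to $L_1/K_1$ (noting that $L_1(\chi_1) \sseq L_1(\zeta_{\ell^m}) = L'$) would force $I_1 = I$ to be principal in $\mathcal{O}_{L'}$. However, the proof of Theorem \ref{nownib1} shows precisely that $I$ is not principal under the given hypotheses---this is the content of steps (3)--(7) of that proof---a contradiction. The main obstacle in this plan is verifying the equality $I = I_1$, and in particular the inclusion $I_1 \sseq I$, which hinges on arithmetic disjointness together with the observation that $\Gal(L_1/K_1)$ acts trivially on $\mathcal{O}_{K_1}$, so that $K_1$-scalars can be pulled out of the defining resolvent sum.
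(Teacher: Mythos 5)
Your opening steps are fine and agree with the paper: arithmetic disjointness of $L$ and $K_{1}$ over $K$ (tameness of $L/K$ forces it to be unramified above $\ell$, while $K_{1}/K$ ramifies only above $\ell$), the identification $\Gal(L_{1}/K_{1})\cong\Gal(L/K)$, and the equality $I_{1}=I$ of resolvent ideals in $\mathcal{O}_{L'}$. The genuine gap is your final claim that ``the proof of Theorem \ref{nownib1} shows precisely that $I$ is not principal --- this is the content of steps (3)--(7) of that proof.'' It does not. Steps (3)--(7) are carried out only \emph{after} the reduction in step (1), which replaces $L$ by a subfield $\tilde L$ in which $\frakp$ is ramified with exponent \emph{exactly} $\ell$; the hypotheses you are working under only guarantee that $e_{\frakp,L/K}$ is divisible by $\ell$, so in general $e_{\frakp,L/K}=\ell^{a}$ with $a>1$ and $\tilde L\neq L$. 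That reduction is effected by Lemma \ref{trace-down}, which transfers the WNIB property of rings of integers down the tower but says nothing about the resolvent ideal of the original extension. In particular the local statements (A)--(C) of that proof (image of $\eta$ equal to $\mu_{\ell}$, valuation $i$ at primes over $\delta_{i}^{-1}\Frakq$, above-$\frakp$ part equal to $\Frakq_{1}^{\theta}$) all use $e_{\frakp,L/K}=\ell$ and fail for the unreduced $L$ when $a>1$. So what the theorem's proof gives you is non-principality of $\tilde I=\mathcal{O}_{\tilde L'}(\mathcal{O}_{\tilde L}:\tilde\chi)$, not of $I$; contrary to your closing remark, the equality $I=I_{1}$ is the easy part, and the real obstacle is exactly this missing non-principality of $I$.

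This comparison between $I$ and $\tilde I$ is the reason the paper proves Lemma \ref{normcompat}: the norm compatibility $(\Norm_{L'/\tilde L'}I)_{\frakp}=\tilde I_{\frakp}$ allows one to pick $y$ in the above-$\frakp$ part of $\M(L'/F')$ with $\Norm_{L'/\tilde L'}y=\theta z$, and since $\theta z$ generates a noncyclic group in the minus part, so does $y$; combined with the cyclicity of $\Ker(\eps_{L'/F'})^{-}$ from Proposition \ref{ambig} this yields $[I]\neq 0$. Your argument as written is complete only in the special case $e_{\frakp,L/K}=\ell$, where one may take $\tilde L=L$. Alternatively, the gap can be closed without Lemma \ref{normcompat} by first applying Lemma \ref{trace-down} to the tower $K_{1}\subset\tilde LK_{1}\subset L_{1}$ and then running your resolvent identification for $\tilde LK_{1}/K_{1}$ against $\tilde L/K$ (note $K_{1}\sseq K(\zeta_{\ell})\sseq\tilde L'$, so Corollary \ref{resolvent-not-principal} may be applied over $\tilde L'$, where non-principality of $\tilde I$ was actually proved); but some such additional step is indispensable.
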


\begin{remark}
Note that Theorem \ref{nownib1} does not apply directly to $L_{1}/K_{1}$ because $K_{1}$
is not linearly disjoint from $\Q(\zeta_{\ell})$ over $\Q$ and is not necessarily totally real.
\end{remark}

\begin{proof}
$L$ is linearly disjoint from $K_{1}$ over $K$ since $[L:K]=\ell^{m}$ and $[K_{1}:K]$ divides
$\ell-1$. Furthermore, $L/K$ is tamely ramified and $K_{1}/K$ is only ramified at
primes above $\ell$, if at all. Therefore $L$ is in fact arithmetically disjoint from $K_{1}$
over $K$, and so for any nontrivial character $\chi$ of $\Gal(L_{1}/K_{1})\cong\Gal(L/K)$,
we have $\mathcal{O}_{L'}(\mathcal{O}_{L} : \chi) = \mathcal{O}_{L'}(\mathcal{O}_{L_{1}} : \chi)$
(note that  $L_{1}=K_{1}L$ and $L'=L(\zeta_{\ell^{m}})$ in this case).
It remains to prove that $I= \mathcal{O}_{L'}(\mathcal{O}_{L} : \chi)$ is not principal.
We closely follow the argument given in the proof of Theorem \ref{nownib1}, using 
Lemma \ref{normcompat}.

There are elements $y$ and $y_1$ in the above-$\frakp$ part (resp.\ the not-above-$\frakp$ part)
of $M(L'/F')$, such that $\varepsilon_{L'/F'}(y+y_1) = [I]$. Similarly (and as before) we
have $\tilde z$ and ${\tilde z}_1$ in the above-$\frakp$ part (resp.\ the not-above-$\frakp$ part)
of $M({\tilde L}'/F')$, such that $\varepsilon_{{\tilde L}'/F'}(\tilde z + \tilde z_1) = [\tilde I]$.
By Lemma \ref{normcompat} and step (5) in the proof of Theorem \ref{nownib1} we may choose 
$y$ and $\tilde z$ in such a way that 
$\theta z~=~\tilde z ~=~ \mathrm N_{L'/{\tilde L}'} y$. Then $y$ generates a noncyclic $\Z$-submodule
of $M(L'/F')^-$ since  $\theta z = \mathrm{N}_{L'/{\tilde L}'} y$
generates a noncyclic submodule of $M({\tilde L}'/F')^-$
as already shown. As at the end of step (5) in the proof of Theorem \ref{nownib1}, this implies that
$I$ is not principal (even more: the class of $I$ is nontrivial in the target of the
map $\varepsilon_{L'/F'}$). Hence $L'/F'$ has no WNIB, and as shown in the first paragraph of
the proof, this implies that $L_1/K_1$ does not have a WNIB either.
\end{proof}

We now discuss just a few of the many variants that Theorem \ref{nownib1} admits.

\begin{remark}\label{weaken-lin-dis}
Condition (a) of Theorem \ref{nownib1} requires that $K$ is linearly disjoint
from $\Q(\zeta_{\ell})$ over $\Q$, or equivalently, that $K(\zeta_{\ell})$ has
maximal degree $\ell-1$ over $K$. Recall from the proof that $D$ is defined to the 
subgroup of $(\Z/\ell\Z)^{\times}$ that is canonically isomorphic to 
$\Gal(L(\zeta_{\ell})/L)$. Minor modifications of the argument allow 
condition (a) to be replaced with a weaker, though more cumbersome, hypothesis:
\begin{enumerate}
\item[($\textrm{a}'$)]
\begin{enumerate}
\item $L$ is linearly disjoint from $k(\zeta_{\ell})$ over $k$; and
\item for $g$ some Fermat prime or $g=2$, we have $\ell > g^{2}$ and 
$\bar{g} \in D \subset (\Z/\ell\Z)^{\times}$.
\end{enumerate}
\end{enumerate}
Note that the only known Fermat primes are $3,5,17,257$ and $65537$.
Since $L$ is totally real and $L(\zeta_{\ell})$ is totally complex, $[L(\zeta_{\ell}):L]$ is even
and so we always have $\overline{-1} \in D$. Hence ($\textrm{a}'$) is no 
improvement over (a) when $\ell \leq 11$, but for example if $\ell=13$, then $D$ 
could be the subgroup of order $6$.

We briefly outline the necessary changes to step (6) of the proof of Theorem \ref{nownib1}.
Consider the elements $\theta$ and $(g-\delta_{g})\theta$ in $\ell R$, which we identify
with elements $u$ and $v$ in $(\Z/r\Z)[\Delta]$, as before. Then using the fact that $g^{2} < \ell$, we compute basis representations for $u_{-}$ and $v_{-}$. We have
\[
\det \left( 
\begin{array}{cc}
2-\ell & 2g-\ell \\
1-g & 1-g
\end{array}
\right)
= 2(g-1)^{2},
\]
where the entries of the upper (resp.\ lower) row of the matrix are
the coefficients of $u_-$ (resp.\ $v_-$) at $\delta_1^{-1}$ and $\delta_g^{-1}$. 
Since $g=2$ or is a Fermat prime, $2(g-1)^{2}$ is some power of $2$ and hence 
relatively prime to $r$ (some odd prime), and the proof concludes as before. Clearly, 
one could weaken ($\textrm{a}'$) even further in special cases where, for example, $r$ is known.
\end{remark}

\begin{theorem}\label{nownib2} 
Let $L/k$ be a Galois (not necessarily abelian) extension of number fields.
Let $K$ be an intermediate field such that $K/k$ and $L/K$ are abelian,
$K$ is totally real, and $L/K$ is tame of odd prime degree $\ell$.
Suppose there exists a prime $\frakp$ of $k$ that is (totally) ramified in $L/K$
(so $e_{\frakp,L/K}=\ell$) and $e_{\frakp,K/k}$ has a nontrivial odd factor.
Assume moreover that conditions (a) and (b) of Theorem \ref{nownib1} are satisfied.
Then $L/K$ has no WNIB.
\end{theorem}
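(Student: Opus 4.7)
The plan is to adapt the proof of Theorem~\ref{nownib1} so that it no longer requires $L/k$ to be abelian. Inspecting that argument, the abelianness of $L/k$ is invoked in only two places: in the initial reduction of step~(1), where one passes to a cyclic $\ell$-power extension $\tilde L/K$ inside $L$; and in step~(4), where the inertia group $\Gal(L/F)$ of $\frakp$ in $L/k$ is shown to be cyclic. Since Theorem~\ref{nownib2} hypothesizes $[L:K]=\ell$ prime from the start, step~(1) becomes unnecessary. The reduction in step~(2), which uses only the abelianness of $K/k$, applies verbatim and allows us to assume $[K:k]=r$ for some odd prime $r$ dividing $e_{\frakp,K/k}$ (with $r=q$ in the case $\ell=3$). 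Steps~(3) and~(6) are purely computational and involve only the group $\Gal(L/K)$ of prime order $\ell$ and the auxiliary $\Delta=\Gal(L(\zeta_{\ell})/L)$, neither of which is affected by relaxing the hypothesis on $L/k$.

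The new difficulty is in step~(4). With $L/k$ only Galois, the inertia group $I_{\frakp}=\Gal(L/F)$ still has order $\ell r$, but may fail to be cyclic. If $r$ is coprime to the residue characteristic $p$ of $\frakp$, ramification is tame, $I_{\frakp}$ is cyclic, and the remainder of the argument proceeds unchanged. In the remaining case $r=p$, the wild inertia $P\trianglelefteq I_{\frakp}$ has order $r$, and $I_{\frakp}$ can be a nonabelian semidirect product $\Z/r\Z\rtimes\Z/\ell\Z$ (a possibility previously excluded by abelianness of $L/k$). Since $L/F$ is totally ramified at $\frakp$ while $F'/F$ is unramified there, the isomorphism $\Gal(L'/F')\cong\Gal(L/F)$ propagates this noncyclicity to the group of interest in step~(5).

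The main obstacle is then to recover the cyclicity of $\Ker(\eps_{L'/F'})^{-}$ used in step~(5) of the previous proof. The key algebraic fact is that $[I_{\frakp},I_{\frakp}]=P$, so $I_{\frakp}^{\mathrm{ab}}\cong\Z/\ell\Z$ is still cyclic. Furthermore, $\Gal(L'/F')$ fixes $\zeta_{\ell}\in F'$, and hypothesis~(b) (when $\ell=3$, with the choice $r=q$) forces $\zeta_{r}\not\in L'$. Combining these inputs with an inflation--restriction argument for $P\trianglelefteq\Gal(L'/F')$, one aims to show that $\Hone(\Gal(L'/F'),\mu_{L',\mathrm{odd}})$ remains cyclic in the nonabelian case. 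Proposition~\ref{ambig} then delivers the needed cyclicity of $\Ker(\eps_{L'/F'})^{-}$, and steps~(5)--(7) of the proof of Theorem~\ref{nownib1} go through verbatim: the element $\theta z$ still generates a noncyclic subgroup of $\pi(\Ker(\eps_{L'/F'})^{-})$, contradicting the principality of the resolvent ideal $I$ that would follow from the existence of a WNIB.
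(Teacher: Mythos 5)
You have located the right pressure point (step (4): cyclicity of $\Gal(L/F)$ when $r$ equals the residue characteristic of $\frakp$), but your treatment of it leaves a genuine gap. Your plan is to concede that $\Gal(L/F)$ may now be a nonabelian semidirect product and to salvage the argument by showing that $\Hone(\Gal(L'/F'),\mu_{L',\mathrm{odd}})$ is still cyclic; but this central claim is only stated as something ``one aims to show'', not proved. Worse, even if it were proved, steps (5)--(7) would \emph{not} go through verbatim in a nonabelian situation: the assertion in step (4) that the resolvent ideal $I=\mathcal{O}_{L'}(\OL:\chi)$ is ambiguous under $\Gal(L'/F')$ rests on the computation $\sigma\bigl((\alpha\mid\chi)\bigr)=(\tau\alpha\mid\chi^{\tau})$ with $\tau=\sigma|_L$, so if conjugation by $\Gal(L/F)$ does not fix $\chi$ one only gets $\sigma(\OL:\chi)=(\OL:\chi^{\tau})$ with $\chi^{\tau}\neq\chi$, and the identity $[I]=\eps_{L'/F'}(\theta z+z')$, on which the whole contradiction is built, is no longer available. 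So accepting the nonabelian case and repairing only the cohomological cyclicity cannot work as stated.

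The paper resolves the difficulty differently: it shows the nonabelian case simply cannot occur. Because $\frakp$ is totally ramified in $L/K$, the inertia field $F$ of $\frakp$ in $L/k$ is contained in $K$, so after the reduction one has $\Gal(L/K)\cong\Z/\ell\Z$ and $\Gal(K/F)\cong\Z/r\Z$ with $L/F$ Galois; cyclicity of $\Gal(L/F)$ is then equivalent to triviality of the conjugation action of $\Gal(K/F)$ on $\Gal(L/K)$. By local class field theory $\Gal(L/K)$ is a quotient of the multiplicative group (in fact of the residue field) of $K$ at the prime above $\frakp$, and total ramification of $\frakp$ in $K/F$ forces that residue field to coincide with the residue field of $F$, so the action is trivial and $\Gal(L/F)$ is abelian, hence cyclic. (Alternatively, one can note that both $\Gal(L/K)$, normal because $K/F$ is Galois, and the wild inertia subgroup of order $r$, normal in the inertia group, are normal Sylow subgroups of $\Gal(L/F)$, which again forces cyclicity.) With this input the proof of Theorem~\ref{nownib1} then applies essentially unchanged, and no new cohomological statement is needed; your proposal is missing exactly this argument.
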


\begin{proof}
This is very similar to, and at some stages slightly simpler than, the proof of Theorem
\ref{nownib1}. We make a brief remark on the necessary changes to part (4) to show
that $L/F$ is cyclic when $r$ and $\frakp$ are not coprime. A key point here is that
$F$ is a subfield of $K$. Observe that $\Gal(L/K) \cong (\Z/\ell\Z)$ and 
$\Gal(K/F) \cong (\Z/r\Z)$ where $r \neq \ell$, and $L/F$ is Galois. 
So $\Gal(L/F)$ is cyclic if and only if it is abelian, which is the case precisely when
the action of $\Gal(K/F)$ on $\Gal(L/K)$ is trivial. However, $\Gal(L/K)$ identifies
via class field theory with a quotient of the multiplicative group of $K$ at the prime
above $\frakp$. Due to total ramification in $K/F$, this residue field is the same 
as the residue field of $F$ at $\frakp$, so the action of $\Gal(K/F)$ on $\Gal(L/K)$ 
is indeed trivial.
\end{proof}

\begin{remark}
Both Theorem \ref{nownib2} and Corollary \ref{nownib1-cor} still hold when hypothesis
($\textrm{a}'$) is assumed instead of (a). Of course, Corollary \ref{nownib1-cor}
can itself be viewed as another weakening of hypothesis (a).
\end{remark}

\begin{definition}
Let $k \subset K \subset L$ be a tower of number fields. We say that
$L/K/k$ has \emph{disjoint ramification} if there is no finite prime $\mathfrak{p}$
that ramifies both in $K/k$ and $L/K$. (We already remarked on this abuse
of language in Definition \ref{abuse-ram-def}.)
\end{definition}

\begin{prop}\label{wnib-implies-disjoint-ram}
Let $L/k$ be an abelian extension of number fields with $[L:k]$ odd and $k$ totally real.
Let $K$ be an intermediate field such that $L/K$ is tame and $L/K$ has a WNIB. 
Suppose that for all prime divisors $\ell$ of $[L:K]$ we have $[K(\zeta_{\ell}):K]=\ell-1$,
and that at least one of the following conditions is satisfied:
\begin{enumerate}
\item $[L:K]$ is not divisible by $3$; or
\item for all primes $q$ dividing $[K:k]$, we have $\zeta_{q} \not \in L(\zeta_{3^{\infty}})$.
\end{enumerate}
Then $L/K/k$ has disjoint ramification.
\end{prop}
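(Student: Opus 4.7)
The plan is to argue by contradiction and apply Theorem \ref{nownib1} directly. Suppose $L/K/k$ does \emph{not} have disjoint ramification, so there exists a finite prime $\frakp$ of $k$ that ramifies both in $K/k$ and in $L/K$. I will verify every hypothesis of Theorem \ref{nownib1} for this $\frakp$, thereby concluding that $L/K$ has no WNIB, contradicting the assumption.

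First I collect some easy preliminaries. Since $L/k$ is abelian, $K/k$ is Galois (even abelian), and both $[K:k]$ and $[L:K]$ are odd, being divisors of $[L:k]$. I would next observe that $K$ is totally real: for any embedding $\sigma: K\hookrightarrow\C$, the composition $\sigma^{-1}c\sigma$ (with $c$ complex conjugation) lies in $\Gal(K/k)$ since $\sigma|_k$ is real, and it is an involution; but $\Gal(K/k)$ has odd order, so $\sigma^{-1}c\sigma=1$, i.e.\ $\sigma$ is real. For Galois extensions, the ramification index divides the degree, so $e_{\frakp,K/k}$ is an odd divisor of $[K:k]$ greater than $1$, hence has a nontrivial odd factor, and likewise $e_{\frakp,L/K}$ is an odd divisor of $[L:K]$ greater than $1$, hence has some odd prime factor $\ell$.

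Next I verify the remaining hypotheses of Theorem \ref{nownib1} at this choice of $\ell$. Condition (a) of Theorem \ref{nownib1} holds directly: by assumption $[K(\zeta_\ell):K]=\ell-1$ for every prime divisor $\ell$ of $[L:K]$. For condition (b) of Theorem \ref{nownib1}, assume $\ell=3$; then $3\mid[L:K]$, so hypothesis (a) of the present proposition fails and hypothesis (b) must hold. Pick any odd prime divisor $q$ of $e_{\frakp,K/k}$ (which exists by the previous paragraph); since $e_{\frakp,K/k}\mid[K:k]$, we have $q\mid[K:k]$, and hypothesis (b) then gives $\zeta_q\notin L(\zeta_{3^{\infty}})$. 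Thus condition (b) of Theorem \ref{nownib1} is satisfied.

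All hypotheses of Theorem \ref{nownib1} are now met (with the same field $k$, intermediate field $K$, prime $\frakp$ and prime $\ell$), so that theorem applies to conclude $L/K$ has no WNIB, contradicting our standing assumption. Hence no such $\frakp$ exists, i.e.\ $L/K/k$ has disjoint ramification. I do not anticipate any genuine obstacle here; the proof is a bookkeeping exercise in checking that the ramification-driven contradictions encoded in Theorem \ref{nownib1} can be triggered by any prime violating the disjoint-ramification condition, with the split into cases (a) and (b) of the proposition precisely engineered to handle the technical hypothesis on $\ell=3$.
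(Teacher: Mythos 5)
Your proposal is correct and takes essentially the same route as the paper: argue by contradiction and apply Theorem \ref{nownib1} to a prime that ramifies in both layers, with hypotheses (a) and (b) of the present statement supplying the technical condition on $\ell$ in Theorem \ref{nownib1}. The paper's own proof is a one-line version of this; you have simply spelled out the routine verifications (that $K$ is totally real and that the ramification indices provide the required odd factors), all of which are sound.
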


\begin{proof}
By assuming the contrary that there is a finite prime ramified in both $K/k$ and $L/K$,
it follows directly from Theorem \ref{nownib1} that conditions (a) and (b) each
give the desired conclusion.
\end{proof}

\begin{remark}
Proposition \ref{wnib-implies-disjoint-ram} can be modified in a number 
of ways by using the variants of Theorem \ref{nownib1} discussed above.
\end{remark}

\section{Splitting theorems for normal integral bases}\label{splitting-theorems}

Let $K$ be a number field and let $\Omega_{K}$ denote its absolute Galois
group. We fix a finite abelian group $G$. A $G$-extension $M/K$ is a commutative
$K$-algebra $M$ with a $G$-action, such that $M$ is a $G$-Galois extension
in the sense of Galois theory of commutative rings (see
\cite{greither} for an introduction), also known as a $G$-Galois algebra.
It is known that any such $M$ has the form $\ind_{G_{0}}^{G} M_{0}$,
where $M_{0}/K$ is a $G_{0}$-Galois extension in the usual sense
(i.e. $M_{0}$ is a field), $G_{0}$ is a subgroup of $G$, and as a $K$-algebra,
$\ind_{G_{0}}^{G} M_{0}$ is just a product of $[G:G_{0}]$ factors $M_{0}$.
(The ``$\ind$'' notation is useful for obtaining the $G$-action on the product.)
The field $M_{0}$ is called the \emph{core field} of the Galois algebra $M$.

The set H$(K,G)$ of all $G$-extensions $M/K$ modulo $G$-isomorphism 
carries the structure of an abelian group. The product of $M$ and $N$ is
given as follows:
$M \tensor_K N$ is a $G\times G$-extension of $K$ in the natural
way; let $D$ (the anti-diagonal) be the kernel of multiplication
$G \times G \to G$, so $(G\times G)/D$ is identified with $G$.
Then $M * N$ is (the class of) $(M \tensor_K N)^D$, with the
natural structure of $(G\times G)/D = G$-extension. (For this,
and more, see for example \cite{mcculloh-abelian}.)

There exists an isomorphism
\[
\Hone(\Omega_K, G) = \Hom(\Omega_K^{ab},G) \longrightarrow \hbox{\rm H}(K,G),
 \quad \phi \mapsto M_\phi
\]
with the following description: for surjective $\phi$, $M_\phi$ is
the fixed field of $K^{alg}$ under the kernel of $\phi$,
with the $G$-action resulting from $\Omega_K/\ker(\phi) \cong G$.
In general, let $G_0$ be the image of $\phi$; then $M_{0,\phi}$ is
defined as just explained, and $M_\phi$ is obtained by induction
from $G_0$ to $G$.

There are canonical subgroups 
$\Hone_{tame}(\Omega_K, G)$ and
 $\Hone_{unr}(\Omega_K, G)$ of  $\Hone(\Omega_K, G)$:
the subgroups afforded by tame (resp.\ unramified) extensions.
In terms of $G$-extensions, $M$ is tame (resp.\ unramified)
if and only if its core field is tame (resp.\ unramified). Using the alternative $\Hone$
description above, $\phi$ is tame (resp.\ unramified) if and only if it is trivial on all
higher ramification groups (resp.\ all inertia groups). 

The class invariant map
\[
\pic: \Hone_{tame}(\Omega_K, G) \longrightarrow \Pic(\O_K[G]), 
\]
sends $M$ to the class of the $\O_K[G]$-module $\O_M$.
(Note that again $\O_M$ can be described in terms of the core field:
it is $\ind_{G_0}^G \O_{M_0}$, or equivalently, the integral closure of 
$\O_K$ (or $\Z$) in $M$.) This is a homomorphism when restricted
to unramified extensions, but this is not the case in general.
However, we do have the following result (this is ascribed to McCulloh
by Brinkhuis; it also appears in \cite[p.225-226]{frohlich-book}). We say
that two $G$-extensions are arithmetically disjoint over $K$ if and only 
if their core fields are.

\begin{lemma}\label{weakmult}
If $M$ and $M'$ are arithmetically disjoint and
tame over $K$, then $$\pic(M * M') = \pic(M)\pic(M').$$
\end{lemma}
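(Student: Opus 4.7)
The plan is to compute $\mathcal{O}_{M*M'}$ as an $\mathcal{O}_{K}[G]$-module and show that its class in $\Pic(\mathcal{O}_{K}[G])$ equals $[\mathcal{O}_{M}]\cdot[\mathcal{O}_{M'}]$. By construction $M*M' = (M\otimes_{K} M')^{D}$, where $D = \ker(G\times G \xrightarrow{\,m\,} G)$ is the anti-diagonal and the residual $(G\times G)/D \cong G$-action on the invariants provides the $G$-extension structure. The strategy is to lift this identity to the integral level using arithmetic disjointness, and then recognize the resulting module as $\mathcal{O}_{M}\otimes_{\mathcal{O}_{K}[G]}\mathcal{O}_{M'}$, which by definition represents the product $\pic(M)\pic(M')$.

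First I would push the identity $M*M' = (M\otimes_{K} M')^{D}$ to the integral level. Arithmetic disjointness of $M$ and $M'$ over $K$ gives
\[
\mathcal{O}_{M\otimes_{K} M'} = \mathcal{O}_{M}\otimes_{\mathcal{O}_{K}}\mathcal{O}_{M'}
\]
as $\mathcal{O}_{K}[G\times G]$-modules; for Galois algebras (not just fields) this is reduced to the case of core fields using the induction description of a $G$-extension together with the compatibility $\mathcal{O}_{\ind_{G_{0}}^{G} M_{0}} = \ind_{G_{0}}^{G}\mathcal{O}_{M_{0}}$. Then
\[
\mathcal{O}_{M*M'} = (\mathcal{O}_{M}\otimes_{\mathcal{O}_{K}}\mathcal{O}_{M'})^{D},
\]
since the ring of integers of the $D$-invariant subalgebra coincides with the $D$-invariants of the ambient ring of integers (the inclusion $\supseteq$ is trivial, and $\subseteq$ follows from $\mathcal{O}_{M*M'} \subseteq \mathcal{O}_{M\otimes M'}\cap(M*M')$).

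Next I would invoke the natural isomorphism of $\mathcal{O}_{K}[G]$-modules
\[
(P\otimes_{\mathcal{O}_{K}} Q)^{D} \cong P\otimes_{\mathcal{O}_{K}[G]} Q
\]
for any pair of invertible $\mathcal{O}_{K}[G]$-modules $P$ and $Q$. Tameness of $M$ and $M'$ ensures that $\mathcal{O}_{M}$ and $\mathcal{O}_{M'}$ are invertible over $\mathcal{O}_{K}[G]$, so this applies. The displayed isomorphism can be verified locally on $\mathrm{Spec}\,\mathcal{O}_{K}[G]$, where $P$ and $Q$ are free of rank one and the claim reduces to $\mathcal{O}_{K}[G\times G]^{D} \cong \mathcal{O}_{K}[G]$, which is immediate: the $D$-orbits on $G\times G$ are parameterized by $m$, and the corresponding orbit sums form a free generator of rank one over $\mathcal{O}_{K}[G]$ for the residual $(G\times G)/D$-action.

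Combining the three steps yields $\mathcal{O}_{M*M'} \cong \mathcal{O}_{M}\otimes_{\mathcal{O}_{K}[G]}\mathcal{O}_{M'}$ and hence $\pic(M*M') = \pic(M)\pic(M')$. The main obstacle, as I see it, lies in the careful bookkeeping of the first step, especially the reduction from general Galois algebras to their core fields and the justification that $D$-invariants commute with the ring-of-integers functor; this is precisely where arithmetic disjointness enters in an essential way, and it explains why the multiplicativity is restricted to arithmetically disjoint pairs rather than holding for $\pic$ in general.
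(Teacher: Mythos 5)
Your overall strategy tracks the paper's own proof closely: both reduce, via arithmetic disjointness, to the integral identity $\mathcal{O}_{M*M'} = (\mathcal{O}_M \otimes_{\mathcal{O}_K} \mathcal{O}_{M'})^D$, and then must identify these $D$-invariants with $\mathcal{O}_M \otimes_{\mathcal{O}_K[G]} \mathcal{O}_{M'}$. Your first step is handled correctly. But there is a genuine gap in the second step.

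You claim the isomorphism $(P\otimes_{\mathcal{O}_K} Q)^{D} \cong P\otimes_{\mathcal{O}_K[G]} Q$ ``can be verified locally,'' but you never name a map between the two modules, and without one the argument proves nothing: any two invertible $\mathcal{O}_K[G]$-modules are locally isomorphic, yet $\Pic(\mathcal{O}_K[G])$ is nontrivial in general. Worse, the obvious candidate --- restricting the natural surjection $P\otimes_{\mathcal{O}_K} Q \twoheadrightarrow P\otimes_{\mathcal{O}_K[G]} Q$ to $D$-invariants --- becomes multiplication by $|G|$ in your free local model $\mathcal{O}_K[G\times G]^D \to \mathcal{O}_K[G]$, so it is not the isomorphism you want. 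The map that works goes the other way: since $G$ is abelian, $p\otimes q\mapsto\sum_{\sigma\in G}\sigma p\otimes\sigma^{-1}q$ descends to a well-defined $\mathcal{O}_K[G]$-linear map $P\otimes_{\mathcal{O}_K[G]} Q \to (P\otimes_{\mathcal{O}_K} Q)^D$; this is precisely the norm $\Norm_D$ on coinvariants. Once that map is written down, your local computation (the $D$-orbit sums in $\mathcal{O}_K[G\times G]$ give a free rank-one generator) does verify that it is an isomorphism, and the rest of your argument goes through. The paper reaches the same endpoint by observing that $P=\mathcal{O}_M\otimes_{\mathcal{O}_K}\mathcal{O}_{M'}$ is locally free over $\mathcal{O}_K[G\times G]$, hence $D$-cohomologically trivial, so $\Norm_D: P_D\to P^D$ is an isomorphism and $P_D = \mathcal{O}_M\otimes_{\mathcal{O}_K[G]}\mathcal{O}_{M'}$. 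One last remark: you flag the integrality/core-field bookkeeping of step one as the main obstacle, but in fact it is this identification of $P^D$ that carries the real content of the lemma.
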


\begin{proof}
We have 
\[
\O_{M*M'} = \O_{(M\tensor M')^D} = (\O_{M\tensor M'})^D
   = (\O_M \tensor_{\O_K} \O_{M'})^D,
\]
where the third equality comes from the arithmetical disjointness.
Let $P = \O_M \tensor_{\O_K} \O_{M'}$. Then $P$ is a locally
free $\O_K[G\times G]$-module and is therefore cohomologically
trivial. Letting $I_D$ denotes the kernel of augmentation, we obtain
\[
P^D = \Norm_D\cdot P \cong P/\{x\in P: \Norm_D(x)=0\} 
= P/I_DP = P \tensor_{\O_K[G\times G]}  \O_K[G]
\]
as $\O_K[G\times G]$-modules. However, the last term is the
finest quotient module of $\O_M \tensor_{\O_K} \O_{M'}$
on which $D=\{(\sigma,\sigma^{-1}):\sigma\in G\}$ 
acts trivially, and this is simply the
tensor product $\O_M \tensor_{\O_K[G]} \O_{M'}$,
which has class $\pic(M)\pic(M')$ in $\Pic(\O_K[G])$.
\end{proof}

\begin{definition}
Let $k \subset K \subset L$ be a tower of number fields. We say that
$L/K/k$ is \emph{arithmetically split} if there exists an extension $L'/k$
such that $L=L'K$ and $L'$ is arithmetically disjoint from $K$ over $k$.
\end{definition}

\begin{theorem}\label{NIB-disjoint-split}
Let $L/K$ be a finite extension of number fields such that $L/\Q$ is abelian,
$L/K$ is tame of odd degree and $K$ is totally real. Then $L/K/\Q$ is arithmetically 
split if and only if $L/K$ has an NIB and $L/K/\Q$ has disjoint ramification.
\end{theorem}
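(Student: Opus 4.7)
\emph{Forward direction.} Suppose $L = L'K$ with $L'/\Q$ arithmetically disjoint from $K$ over $\Q$. Then $L'/\Q$ is abelian as a subfield of $L/\Q$, and tame because arithmetic disjointness identifies the ramification of $L'/\Q$ at each prime $p$ with that of $L/K$ at a prime above $p$ (which is tame by hypothesis). The Hilbert--Speiser theorem supplies an NIB for $L'/\Q$, and Lemma \ref{arith-disjoint} applied with base field $\Q$ transfers it to $L'K/K = L/K$. Disjoint ramification follows immediately from the arithmetic disjointness of $L'$ and $K$.

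\emph{Reverse direction: reduction to a group-theoretic claim.} Set $G = \Gal(L/\Q)$ and $H = \Gal(L/K)$; since $G$ is abelian, the inertia group $I_p \leq G$ at each prime $p$ of $\Q$ is well defined. Disjoint ramification translates to the statement that every $I_p$ satisfies $I_p \subseteq H$ or $I_p \cap H = 1$. An arithmetic splitting $L = L'K$ with $L'$ arithmetically disjoint from $K$ over $\Q$ corresponds precisely to a complement $G_1$ of $H$ in $G$ satisfying $G_1 \supseteq I_p$ for every $p$ with $I_p \cap H = 1$: setting $L' := L^{G_1}$, the complement condition gives $L' \cap K = \Q$ and $L'K = L$, while the inertia containment forces $L'/\Q$ to be unramified at every prime ramifying in $K/\Q$. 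Brinkhuis's Theorem \ref{br-non-split} (applied with $k = \Q$, using the NIB together with $K$ totally real and $[L:K]$ odd) gives that the sequence $1 \to H \to G \to G/H \to 1$ splits, so some complement of $H$ in $G$ exists. Writing $J := \langle I_p : I_p \cap H = 1 \rangle$, a standard abelian-group manoeuvre---modifying any given complement by a prescribed element of $\Hom(G/H, H)$---produces the desired $G_1 \supseteq J$ whenever $J \cap H = 1$.

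\emph{Main obstacle: $J \cap H = 1$.} I intend to argue by contradiction. Suppose $H_1 := J \cap H$ is nontrivial and set $M := L^{H_1}$, so $K \subseteq M \subseteq L$, $\Gal(L/M) = H_1$ is nontrivial of odd order (in particular not $2$-elementary), and both $L$ and $M$ are totally real (the former because $K$ is totally real and $[L:K]$ is odd, the latter as a subfield of $L$). A direct inertia calculation shows $L/M$ is unramified at every $p$ with $I_p \cap H = 1$. In the favourable case that $L/M$ is unramified outright, Brinkhuis's unramified Theorem \ref{br-unram} gives no NIB for $L/M$; combining this with Lemma \ref{trace-down}, Lemma \ref{weakmult}, and the $\pic$-formalism of Section \ref{splitting-theorems} contradicts the NIB on $L/K$. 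In the residual case where some $p$ with $I_p \subseteq H$ still ramifies in $L/M$, Corollary \ref{nownib1-cor}---applied to a sub-tower $L_1/K_1$ obtained by adjoining $\zeta_\ell$ for an odd prime $\ell$ dividing $|H_1|$---rules out the WNIB, and hence the NIB, for that sub-tower, again yielding a contradiction via Lemma \ref{trace-down}. The bulk of the technical work lies in propagating the auxiliary nonexistence of an NIB back to a contradiction with the NIB on $L/K$ itself.
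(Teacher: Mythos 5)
Your reverse direction hinges on a group-theoretic lemma that is false as stated. Having used Theorem \ref{br-non-split} to produce \emph{some} complement $G_2$ of $H$ in $G$, you claim that whenever $J \cap H = 1$, ``a standard abelian-group manoeuvre---modifying any given complement by a prescribed element of $\Hom(G/H,H)$---produces the desired $G_1 \supseteq J$.'' This requires extending a partially defined homomorphism on the image of $J$ in $G/H$ to all of $G/H$ with values in $H$, and such extensions need not exist. Concretely: take $G = \Z/3\Z \times \Z/9\Z$, $H = \Z/3\Z \times 0$, and $J = \langle (1,3)\rangle$. Then $J \cap H = 0$ and the sequence $1 \to H \to G \to G/H \to 1$ splits (e.g.\ by $0 \times \Z/9\Z$), but every complement of $H$ has the form $\{(f(b),b) : b \in \Z/9\Z\}$ for some $f \in \Hom(\Z/9\Z,\Z/3\Z)$, and every such $f$ kills $3$, so no complement contains $(1,3)$. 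Thus your reduction would, if completed, require a further argument showing that the NIB hypothesis rules out this configuration arithmetically, and that is precisely the substance of the theorem rather than a side condition you can dispose of with a $\Hom$-torsor argument. A secondary gap: in the ``favourable case'' you derive that $L/M$ has no NIB via Theorem \ref{br-unram} and then want a contradiction with the NIB on $L/K$ ``via Lemma \ref{trace-down}'', but that lemma passes from $N/K$ to $M/K$ (the lower layer over the same base), not from $L/K$ to $L/M$; the latter implication is not available, so the contradiction does not materialize. The ``residual case'' appeal to Corollary \ref{nownib1-cor} is likewise not set up: that corollary needs the ramification-index hypotheses of Theorem \ref{nownib1}, which your $M$ and $p$ are not shown to satisfy.

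For contrast, the paper's proof never invokes Theorem \ref{br-non-split} and never tries to produce a complement of $H$ in $G$ directly. Instead, working in $\Hone(\Omega_{\Q}^{ab},\Gal(L/K))$ it \emph{constructs} the putative $L'$ as a Galois algebra by prescribing the cocycle to agree with that of $L/K$ on the inertia factors $T_p$ with $p$ unramified in $K$ and to vanish on the remaining $T_p$. The discrepancy $M := (K\otimes_\Q L') * L^{\mathrm{op}}$ is then an \emph{unramified} $G$-extension of $K$. Triviality of $M$ is exactly what is needed, and it is forced not by group theory but by the NIB hypothesis: Brinkhuis's unramified Theorem \ref{br-unram} shows $\pic(M_0)$ is nontrivial if $M$ is nontrivial, two Picard-group claims (proved via functoriality and a faithfully-flat-descent/Amitsur-cohomology computation using Lenstra's result on minus units) propagate this to $\pic(M) \neq 0$, and Lemma \ref{weakmult} applied to $L = L_0 * M^{\mathrm{op}}$ then contradicts the NIB on $L/K$. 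The Galois-algebra formalism is what lets the argument sidestep the obstruction your complement-finding step runs into.
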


\begin{remark}
If $n > 2$ is not a prime power, then $\Q(\zeta_{n})/\Q(\zeta_{n})^{+}$
is unramified at all finite primes (see \cite[Proposition 2.15]{wash})
and so $\Q(\zeta_{n})/\Q(\zeta_{n})^{+}/\Q$ has disjoint ramification.
Furthermore, $\Q(\zeta_{n})/\Q(\zeta_{n})^{+}$ has an NIB generated by $\zeta_{n}$
but $\Q(\zeta_{n})/\Q(\zeta_{n})^{+}/\Q$ is not arithmetically split. Therefore
the hypothesis that $[L:K]$ is odd cannot be completely removed from
Theorem \ref{NIB-disjoint-split}.
\end{remark}

\begin{proof}
(1) Suppose that $L/K/\Q$ is arithmetically split. Then $L'/\Q$ has an NIB by
the Hilbert-Speiser Theorem and so $L/K$ also has an NIB by Lemma \ref{arith-disjoint}.
Furthermore, it is clear that $L/K/\Q$ must have disjoint ramification.

(2) Suppose conversely that $L/K$ has an NIB and $L/K/\Q$ has disjoint ramification.
Let $\Omega = \Omega_{\Q}^{ab}$ and let $\Delta \subset \Omega$ be the group fixing $K$.
Let $G=\Gal(L/K)$ and $\phi \in \Hone(\Omega_K,G)$ be associated to the $G$-extension $L/K$. Then $\phi$ must factor through $\Delta$ because $L$ is abelian over $\Q$.
On the other hand, there is the following general fact: 
if $\psi \in \Hone(\Omega, G)$ belongs to the extension 
$M/\Q$ for some $M$ under the correspondence explained above, 
then $\psi|_{\Delta}$ belongs to the base-changed $G$-extension $K \tensor_{\Q} M/K$.
(Even if $M$ is a field, $K \tensor_{\Q} M$ need not be a field;
but it certainly is a $G$-Galois algebra. This is another advantage
of the formalism of Galois algebras.)

(3) Since the maximal abelian extension of $\Q$ is the
linearly disjoint compositum of all its inertia fields (each
being given in the form $\Q(\zeta_{p^\infty})$, $p$ prime),
the group $\Omega$ is the direct product of all the inertia
groups: $\Omega = \prod_p T_p$, with $p$ running over all primes.
For any set $\Sigma$ of rational primes, let $T_\Sigma = 
\prod_{p\in \Sigma} T_p \subset \Omega$. Now let $S$ be
the set of primes that ramify in $K/\Q$, and $S'$ its complement.
Then clearly $\Omega = T_S \times T_{S'}$. Furthermore, 
$T_{S'}$ is a subgroup of $\Delta$ (the group corresponding to $K$), 
because $L/K/\Q$ has disjoint ramification.
On the other hand, for every prime of $K$ over $p$, its inertia 
group in $\Q^{ab}/K$ is given by $T_p \cap \Delta$.

(4) Let $L/K$ be given by $\phi: \Delta \to G$. (Then $\phi$ is
onto since $L$ is a field.) We construct $\psi: \Omega \to G$
as follows:
\begin{eqnarray*}  \psi|_{T_{S'}} &=& \phi|_{ T_{S'}}; \\
 \psi|_{T_S} &=& 1_G. 
\end{eqnarray*}
Let $L'$ be the $G$-extension of $\Q$ attached to $\psi$
(so $L'=M_\psi$ in the notation used above). 
By construction, $L'$ is arithmetically disjoint from 
$K$ over $\Q$. In particular, the Galois algebra
$L_0 :=K \tensor_{\Q} L'$ over $K$ is a field, and
as said in (2), $L_0/K$ is attached to $\psi|_{\Delta}$. 

We now need an explicit description of the inverse of
a $G$-extension $N/K$ in H$(K,G)$: this is simply $N^{op}$,
which equals $N$ as a $K$-algebra, but $G$ acts through
the inverse map $\sigma \mapsto \sigma^{-1}$. We define
a new $G$-extension by setting
\[
M := L_0 * L^{op} 
\]
(product in H$(K,G)$).  Then $M$ is attached to the
difference $\alpha:=\psi|_\Delta - \phi$. Now $\alpha$ is trivial
on $T_{S'}$ by construction, and it is trivial on $T_p\cap \Delta$ 
for each $p\in S$, since
$\phi$ is trivial on $T_p\cap \Delta$ (assumption on ramification in $L/K$)
and $\psi$ is trivial on $T_p$ by definition. This means
precisely that $\psi$ is trivial on
{\it all\/} ramification groups in $\Delta$, that is,
$M/K$ is unramified.

(5) If $M/K$ is the trivial $G$-extension (equivalently: its
core field is just $K$), then $L_0$ and $L$ are the same as
$G$-extensions of $K$, in particular, they are the same as
$K$-algebras. Hence
$L_0=L$ considered as subfields of $\Q^{ab}$, and we recall
that $L'$ is arithmetically disjoint from $K$ over $\Q$. 
Thus it now suffices to show that the other case, i.e., $M/K$ nontrivial, 
is impossible.

(6) The class invariant map is compatible with induction,
so if $M_0$ is the core field of $M$, then $\pic(M)= \ind_{G_0}^G \pic(M_0)$.
Let $j: G \to G$ be the inversion map on $G$;
by functoriality $j$ induces an involution $j_*$ on $\Pic(\O_K[G])$.
In the following, we let $X^{-}$ denote the subgroup of all
$x \in X$ having odd order satisfying $j_*x = -x$.
We make two claims:

\begin{enumerate}
\renewcommand{\labelenumi}{(\Alph{enumi})}
\item $\pic(M_0) \in \Pic(\O_K[G_0])^{-}$; and
\item induction induces an injection
$\Pic(\O_K[G_0])^- \to \Pic(\O_K[G])^-$.
\end{enumerate}

We assume the validity of these claims and return to their proofs later. 
Since $M_{0}/K$ is unramified, $[M_{0}:K]$ is odd
and $K$ is totally real, Theorem \ref{br-unram} 
(\cite[Theorem 1]{brink}) due to Brinkhuis shows that $M_{0}/K$ has no NIB, i.e.,
$\pic(M_{0})$ is nontrivial. Hence, by the two claims, $\pic(M)$ is also nontrivial. 
Now we have 
\[ 
M = L_0 * L^{op},
\] 
which is equivalent to
\[
L = L_0 * M^{op}.
\]
By the Hilbert-Speiser Theorem, $L'/\Q$ has an NIB since it is tame abelian
(this follows from the tameness of $L/K$ and the construction of $L'$).
By Lemma \ref{arith-disjoint}, it follows that $L_0/K$ also has an NIB 
since $L'$ is arithmetically disjoint from $K$ over $\Q$. Furthermore,
we started from the assumption that $L/K$ has an NIB.
Therefore Lemma \ref{weakmult} applied to $L = L_0 * M^{op}$
leads to an immediate contradiction.

(7) It remains to establish claims (A) and (B).

\smallskip

Proof of (A):
It follows from the fact that $\pic$ is a homomorphism on unramified extensions
that  $|G_0|\pic(M_0)$ is trivial. By functoriality, $\pic(j_*(M_0)) = j_*(\pic(M_0))$.
(Here $j_*(M_0)$ is the same algebra as $M_0$, with inverted
action of $G$.) But $j_*(M_0)$ happens to also be the inverse
of $M_0$ in H$(K,G_0)$, so again because $\pic$ is a homomorphism
on unramified extensions, $\pic(j_*(M_0)) = -(\pic(M_0))$.

\smallskip

Proof of (B): This is considerably harder.
We write $U$ for $G_0$.
The main obstacle is that $S:=\O_K[G]$ is not a Galois extension
of the ring $R:=\O_K[U]$, so Galois cohomology cannot be
used to calculate $\Ker(\Pic(R) \to \Pic(S))$. Instead,
we use faithfully flat descent. Of course, $S$ is faithfully
flat (even free) over $R$. The first Amitsur cohomology of
the multiplicative group $\Hone_{A}(S, \mathbb{G}_m)$ is canonically
isomorphic to $\Ker(\Pic(R) \to \Pic(S))$. We recall the definition:
there is a complex
\[
S^{\times} \stackrel{\partial_{1}} \longrightarrow (S\tensor_R S)^{\times} 
\stackrel{\partial_{2}} \longrightarrow (S\tensor_R S\tensor_R S)^{\times},
\]
where $\partial_1$ sends $s$ to $s\tensor s^{-1}$,
and $\partial_2$ sends $u$ to 
$u_1 \cdot u_2^{-1} \cdot u_3$. Here $u_1, u_2, u_3 \in 
(S\tensor_R S\tensor_R S)^{\times}$ denote the respective images of
$u$ under the maps defined on $(S\tensor_R S)^{\times}$, 
putting in a 1 on the left, in the middle and on the right,
so, for example, $u_2(s \tensor t) = s \tensor 1 \tensor t$.

The Amitsur cohomology group is now the cohomology of this
complex at the middle. We will show that the odd minus part
of this is trivial. This heavily relies on an important
result of Lenstra (see \cite[p.159]{brink}): 
If $K$ is totally real (and this is the
case in our situation), then the ``minus part'' 
$(\O_K[\Gamma]^{\times})^{1-j}$ of the
unit group of the group ring  of
any abelian odd order group consists only of $\pm \Gamma$ itself.
It is obvious that $S\tensor_R S$ can be identified with
the group ring $\O_K[G^{(2)}]$, where $G^{(2)}$ is the
pushout of $G$ with itself over $G_0$ (more explicitly:
$G \times G$ factored out by all $(z,z^{-1})$ with $z\in G_0$),
and a similar statement holds for the triple tensor product.
We exponentiate all terms in the last complex with $1-j$,
and obtain (we neglect $\pm1$):
\[
G   \longrightarrow   G^{(2)}  \longrightarrow  G^{(3)},
\]
and the maps are in close analogy to the previous maps:
$x \in G$ goes to $(x,x^{-1}) \in G^{(2)}$, and $(x,y) \in G^{(2)}$
goes to $(x,y,1) (x^{-1}, 1, y^{-1}) (1,x,y) \in G^{(3)}$. 
The cohomology of this new complex then is just the
minus part of the cohomology of the old one, at least
in the odd part.
It is now just an exercise to show that this new complex is exact,
so its middle cohomology is trivial, and this means that
the odd minus part of the Amitsur cohomology is trivial, as
required.
\end{proof}

\begin{theorem}\label{NIB-split}
Let $L/K$ be a finite extension of number fields such that $L/\Q$
is abelian, $L/K$ is tame and $[L:\Q]$ is odd. Suppose that either
\begin{enumerate}
\item $[L:K]$ is not divisible by $3$; or
\item for all primes $q$ dividing $[K:\Q]$, we have $\zeta_{q} \not \in L(\zeta_{3^{\infty}})$.
\end{enumerate}
Then $L/K$ has an NIB if and only if $L/K/\Q$ is arithmetically split.
\end{theorem}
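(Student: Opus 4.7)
The ``if'' direction repeats step~(1) of the proof of Theorem~\ref{NIB-disjoint-split}: given an arithmetic splitting $L = L'K$, the disjointness together with the tameness of $L/K$ force $L'/\Q$ to be tame abelian, so the Hilbert--Speiser theorem produces an NIB for $L'/\Q$, which transports to an NIB for $L/K$ via Lemma~\ref{arith-disjoint}.

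For the ``only if'' direction, suppose $L/K$ has an NIB. Since $L/\Q$ is abelian of odd degree, $L$ (and hence $K$) is totally real, so all the hypotheses of Theorem~\ref{NIB-disjoint-split} hold. It therefore suffices to prove that $L/K/\Q$ has disjoint ramification, after which the arithmetic splitting follows from that theorem. Since an NIB implies a WNIB, the plan is to invoke Proposition~\ref{wnib-implies-disjoint-ram}: its conditions~(a) and~(b) are precisely our conditions~(i) and~(ii), so the only further input needed is the hypothesis $[K(\zeta_\ell):K] = \ell-1$ for every prime $\ell$ dividing $[L:K]$.

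For $\ell\in\{3,5\}$ this hypothesis is automatic: $[K\cap\Q(\zeta_\ell):\Q]$ divides both the odd integer $[K:\Q]$ and $\ell-1\in\{2,4\}$ (a power of $2$), hence equals~$1$. The main obstacle is the case $\ell\ge 7$, where $F := K\cap\Q(\zeta_\ell)$ can be a nontrivial odd-degree subfield of $\Q(\zeta_\ell)^+$ and Proposition~\ref{wnib-implies-disjoint-ram} does not apply directly.

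To handle this case the plan is to appeal to Corollary~\ref{nownib1-cor}. Arguing by contradiction, assume a prime $\frakp$ of $\Q$ is ramified in both $K/\Q$ and $L/K$. After reducing via Lemma~\ref{trace-down} to $L/K$ cyclic of degree $\ell^m$ with $e_{\frakp,L/K}=\ell$, one observes that $L\cap\Q(\zeta_\ell)=F$ as well, since $[L:K]=\ell^m$ is coprime to $(\ell-1)/[F:\Q]$. One then seeks subfields $K_0\subseteq K$ and $L_0\subseteq L$ with $K_0\cdot F=K$, $K_0\cap F=\Q$, and $[L:L_0]=[K:K_0]$, so that $(L_0,K_0)$ satisfies the full hypothesis~(a) of Theorem~\ref{nownib1} and our $(L,K)$ plays the role of $(L_1,K_1)$ in Corollary~\ref{nownib1-cor}; the corollary would then deny $L/K$ a WNIB, contradicting our assumption. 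The delicate point, which I expect to be the main difficulty, is the existence of such a complement $K_0$ inside the abelian group $\Gal(K/\Q)$; when no complement exists one has to appeal instead to the weakened hypothesis~$(\mathrm{a}')$ of Remark~\ref{weaken-lin-dis} (with $g=2$ or a Fermat prime) to reach the same contradiction.
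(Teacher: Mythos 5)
Your ``if'' direction and the reduction of the ``only if'' direction to Theorem~\ref{NIB-disjoint-split} (so that it suffices to prove disjoint ramification) match the paper. The reduction to $L/K$ cyclic of $\ell$-power degree is also right, as is the observation that $[K(\zeta_\ell):K]=\ell-1$ is automatic when $\ell-1$ is a power of $2$. The problem is the case you yourself flag as the main difficulty: for general $\ell$, your plan asks for subfields $K_0\subseteq K$ and $L_0\subseteq L$ complementary to $F=K\cap\Q(\zeta_\ell)$, i.e.\ a direct decomposition $\Gal(K/\Q)\cong\Gal(K/F)\times\Gal(F/\Q)$ compatible with $L$. You correctly sense that no such complement need exist inside $\Gal(K/\Q)$, but the proposed fallback is not a fix: Remark~\ref{weaken-lin-dis}'s condition $(\mathrm{a}')$ weakens \emph{only} the requirement on the size of the subgroup $D\subseteq(\Z/\ell\Z)^\times$ corresponding to $\Gal(L_0(\zeta_\ell)/L_0)$, and it still needs $\bar g\in D$ with $\ell>g^2$ for $g=2$ or a Fermat prime --- a condition with no reason to hold in the situation where the complement fails to exist. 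So there is a genuine gap, not merely an unverified technicality.

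The paper closes this gap by a different maneuver. Instead of seeking a complement of $F$ inside $K$ and $L$, one first passes to $K(\zeta_\ell)$ and $L(\zeta_\ell)$ --- noting that $L(\zeta_\ell)/K(\zeta_\ell)$ inherits the NIB from $L/K$ by arithmetic disjointness and Lemma~\ref{arith-disjoint} --- and then uses Dirichlet characters to ``remove $\Q(\zeta_\ell)$'' at that level. Writing the conductor of $L(\zeta_\ell)$ as $m\ell^t$ with $\ell\nmid m$, one has $X^{(\ell)}\subseteq X_{L(\zeta_\ell)}\subseteq X^{(m)}\times Y\times X^{(\ell)}$ (with $Y$ the $\ell$-Sylow of $X^{(\ell^t)}$), and a projection argument gives $X_{L(\zeta_\ell)}=Z\times X^{(\ell)}$; one defines $L'$ (and analogously $K'$) from $Z$. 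The key point is that this direct-sum decomposition of the character group \emph{always} exists, because $X^{(\ell)}$ sits as a direct factor in $X^{(m)}\times Y\times X^{(\ell)}$; the resulting $K'$ is linearly disjoint from $\Q(\zeta_\ell)$ but is a subfield of $K(\zeta_\ell)$, not necessarily of $K$. Washington's Theorem~3.5 shows the ramification of $p$ survives in $L'/K'$ and $K'/\Q$, so Theorem~\ref{nownib1} applies to $L'/K'$, and Corollary~\ref{nownib1-cor} (with $L_1=L(\zeta_\ell)$, $K_1=K(\zeta_\ell)$) then denies $L(\zeta_\ell)/K(\zeta_\ell)$ a WNIB, contradicting the NIB established for it. In short: you need to enlarge to $K(\zeta_\ell)$ before splitting off $\Q(\zeta_\ell)$, not attempt the split inside $K$.
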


\begin{proof}
Suppose that $L/K/\Q$ is arithmetically split. Then $L'/\Q$ has an NIB by
the Hilbert-Speiser Theorem and so $L/K$ also has an NIB by Lemma \ref{arith-disjoint}.

Suppose conversely that $L/K$ has an NIB. There exist intermediate fields 
$L_{1}, \ldots, L_{r}$ such that $L$ is equal to the compositum $L_{1} \cdots L_{r}$
and for (not necessarily distinct) odd primes $\ell_{1}, \ldots, \ell_{r}$ we have 
$\Gal(L_{i}/K) \cong (\Z/\ell_{i}^{s_{i}}\Z)$ for some $s_{i} \geq 1$. 
By Lemma \ref{trace-down}, each extension $L_{i}/K$ has an NIB. 
Suppose that each $L_{i}/K/\Q$ is arithmetically split, i.e., there exist fields $L_{i}'$
each arithmetically disjoint from $K$ over $\Q$ such that $L_{i} = L_{i}' \cdot K$.
Let $L' = L_{1}' \cdots L_{r}'$. It is straightforward to check that $L = L' \cdot K$
and that $L'$ is arithmetically disjoint from $K$ over $\Q$. Hence $L/K/\Q$
is arithmetically split, as desired. Thus we are reduced to the case where 
$L/K$ is cyclic and $[L:K]=\ell^{s}$ for some odd prime $\ell$ and some $s \geq 1$.

Observe that $L$ is linearly disjoint from $K(\zeta_{\ell})$ over $K$ since $[L:K]=\ell^{s}$ 
and $[K(\zeta_{\ell}):K]$ divides $\ell - 1$. Furthermore, $L/K$ is tamely ramified and 
$K(\zeta_{\ell})/K$ is only ramified at primes above $\ell$, if at all. 
Therefore $L$ is in fact arithmetically disjoint from $K(\zeta_{\ell})$ over $K$, and so 
$L(\zeta_{\ell})/K(\zeta_{\ell})$ also has an NIB by Lemma \ref{arith-disjoint}.

Suppose for a contradiction that $L/K/\Q$ does not have disjoint ramification, i.e., 
there exists a prime $p$ that ramifies in both $K/\Q$ and $L/K$. 
(Note that $p \neq \ell$ because $L/K$ is tamely ramified.) 
It is straightforward to see that $p$ ramifies in both $K(\zeta_{\ell})/\Q$ and
$L(\zeta_{\ell})/K(\zeta_{\ell})$.

We now use the theory of Dirichlet characters as described in \cite[Chapter 3]{wash}.
For $n \in \N$, let $X^{(n)}$ denote the group of Dirichlet characters corresponding
to $\Q(\zeta_{n})$. Let $m\ell^{t}$ be the conductor of $L(\zeta_{\ell})$ over $\Q$ where 
$\ell \nmid m$. Let $X$ be the group of Dirichlet characters corresponding to $L(\zeta_{\ell})$ 
and let $Y$ be the Sylow-$\ell$ subgroup of $X^{(\ell^{t})}$. Then we have
$X^{(\ell)} \subseteq X \subseteq X^{(m)} \times Y \times X^{(\ell)}$
and so $X$ is of the form $Z \times X^{(\ell)}$. Let $L'$ be the field corresponding
to $Z$ and construct $K'$ analogously, i.e., ``remove $\Q(\zeta_{\ell})$''.
By \cite[Theorem 3.5]{wash}, $p$ is ramified in both $L'/K'$ and $K'/\Q$.
Moreover, $[K':\Q]$ is odd since $[K':\Q]$ divides $[K:\Q]$ divides $[L:\Q]$,
and so $K'$ is totally real.
Hence $L'/K'$ satisfies the hypotheses of Theorem \ref{nownib1}, and so
applying Corollary \ref{nownib1-cor} shows that $L(\zeta_{\ell})/K(\zeta_{\ell})$
does not have a WNIB. However, this is a contradiction because
$L(\zeta_{\ell})/K(\zeta_{\ell})$ has an NIB.  We therefore conclude that $L/K/\Q$ 
must in fact have disjoint ramification.
Since $[K:\Q]$ is odd and $K/\Q$ is Galois, $K$ is totally real and so 
Theorem \ref{NIB-disjoint-split} now gives the desired result.
\end{proof}

\begin{remark}
Condition (b) of Theorem \ref{NIB-split} can be weakened as follows
(we use notation from the proof): for each $i$ with $\ell_{i}=3$ and each
prime $q$ dividing $[K:\Q]$, we have $\zeta_{q} \not \in L_{i}(\zeta_{3^{\infty}})$.
\end{remark}

\begin{remark}
The results of Brinkhuis stated in the introduction can be easily recovered 
in the special setting of Theorem \ref{NIB-split}. It is straightforward to see that extensions satisfying 
the hypotheses of Theorem \ref{br-non-split} cannot be arithmetically split, and
Theorem \ref{br-unram} for $L$ absolutely abelian becomes a consequence of the fact that 
there are no nontrivial unramified extensions of $\Q$.
Of course, Brinkhuis's results also hold in a much more general setting 
and it should be noted that the proof of Theorem \ref{NIB-split} relies
on Theorem \ref{br-unram}.
\end{remark}

\section{Acknowledgments}\label{acknowledgments}

The authors are grateful to the Deutscher Akademischer Austausch Dienst (German Academic Exchange Service) for a grant allowing the second named author to visit the first for the 2006-07 academic year, thus making this collaboration possible. Furthermore, the authors are indebted to the referee for several corrections and helpful comments.

\bibliography{WNIB-Bib}{}
\bibliographystyle{amsalpha}

\end{document}